\numberwithin{equation}{section}
\newtheorem{theorem}{Theorem}[section]
\newtheorem{definition}[theorem]{Definition}
\newtheorem{lemma}[theorem]{Lemma}
\newtheorem{proposition}[theorem]{Proposition}
\newtheorem{remark}[theorem]{Remark}
\newtheorem{claim}{Claim}
\numberwithin{equation}{section}
\def\bfi{\mathbf{i}}
\def\bfj{\mathbf{j}}
\def\al{\alpha}
\def\be{\beta}
\def\h{\mathfrak{h}}
\def\mg{\mathfrak{g}}
\def \<{\langle}
\def \>{\rangle}
\def\GG{\mathcal{G}}
\def\UU{\mathcal{U}}
\def\VV{\mathcal{V}}
\def\HH{\mathcal{H}}
\def\DD{\mathcal{D}}
\def\RR{\mathcal{R}}
\newcommand{\C}{\mathbb C}
\newcommand{\N}{\mathbb{N}}
\newcommand{\Z}{\mathbb{Z}}
\def\Ind{\mathrm{Ind}}
\def\supp{\mathrm{supp}}
\begin{document}
\title[Non-weight modules]{Non-weight modules over the mirror  Heisenberg-Virasoro algebra}
 \author{Dongfang Gao}
  \address{D. Gao: School of Mathematical Sciences, University of Science and Technology of China,
  Hefei, Anhui 230026, P. R. China}
  \email{gaodfw@mail.ustc.edu.cn}
  \author{Yao Ma}
  \address{Y. Ma: School of Mathematics and Statistics, Northeast Normal University, Changchun, Jilin 130024,   P. R. China}
  \email{may703@nenu.edu.cn}
  \author{Kaiming Zhao}
  \address{K. Zhao: Department of Mathematics, Wilfrid Laurier University,
  Waterloo, ON N2L 3C5, Canada,  and School of
		Mathematical Science, Hebei Normal (Teachers)
		University, Shijiazhuang, Hebei, 050024 P. R. China.}
  \email{kzhao@wlu.ca}
\date{}
 \keywords{mirror  Heisenberg-Virasoro algebra, tensor product, Whittaker module, $U(\C d_0)$-free module, irreducible module}
  \subjclass[2020]{17B10, 17B20,17B65,17B66,17B68}
\maketitle

\begin{abstract}
In this paper, we study irreducible non-weight modules over the mirror  Heisenberg-Virasoro algebra $\DD$, including Whittaker modules, $\UU(\C d_0)$-free modules,  and their tensor products. More precisely, we give the necessary and sufficient conditions for the Whittaker modules to be irreducible. We determine all $\DD$-module structures on $\UU(\C d_0)$,  and find  the    necessary and sufficient conditions for   these modules  to be irreducible. At last we determine the necessary and sufficient conditions for the tensor products of Whittaker modules and $\UU(\C d_0)$-free modules to be irreducible, and    obtain that any two such tensor products are isomorphic if and only if the corresponding Whittaker modules and $\UU(\C d_0)$-free modules are isomorphic. These lead to many new irreducible non-weight modules over $\DD$.
\end{abstract}

\section{Introduction}
It is well-known that the Virasoro algebra $\VV$ is one of the most important Lie algebras
in mathematics and in mathematical physics  because of its widespread applications
  in  quantum physics (see \cite{GO}), conformal field theory (see \cite{DMS}),
vertex operator algebras (see \cite{DMZ,FZ}), and so on.
Many other interesting and important algebras are closely related to the Virasoro algebra, such as the  Schr{\"o}dinger-Virasoro algebra (see \cite{H,H1}), the  twisted  Heisenberg-Virasoro algebra (see \cite{ACKP, JJ, LZ1}) and the mirror  Heisenberg-Virasoro algebra $\DD$ (see \cite{B,GZ,LPXZ}) which is the even part of the mirror  $N=2$ superconformal algebra (see \cite{B}).
The mirror  Heisenberg-Virasoro algebra $\DD$  has a nice structure (see Definition \ref{def.1})
which is similar to the  twisted Heisenberg-Virasoro algebra.
This algebra is the main object   we concern in this paper.

Harish-Chandra modules and weight modules with infinite-dimensional weight spaces
have been the most popular modules in the   representation theory for many Lie algebras with triangular decomposition $\GG=\GG_+\oplus \h\oplus \GG_-$.
To some extent, Harish-Chandra modules are well understood for many infinite-dimensional Lie algebras, for example,
the affine Kac-Moody algebras in \cite{CP, GZ1}, the Virasoro algebra in \cite{A,FF,M},
the twisted Heisenberg-Virasoro algebra  in \cite{ACKP, LZ1},
the Schr{\"o}dinger-Virasoro algebra in \cite{LS,TZ1},
and the mirror  Heisenberg-Virasoro algebra in \cite{LPXZ}.
There are also some researches about weight modules with infinite-dimensional weight spaces
(see \cite{BBFK, CGZ,GZ,LZ3}).

Recently, non-weight modules over $\GG$ attract more attentions from mathematicians. In particular,  Whittaker modules, and   $\UU(\h)$-free $\GG$-modules    have been widely studied for many Lie algebras.
Whittaker modules  for $\mathrm{sl_2(\C)}$ were  constructed by Arnal and Pinzcon (see \cite{AP}).
Whittaker modules  for arbitrary finite-dimensional complex semisimple Lie algebra $\mg$ were  introduced and systematically studied by Kostant in \cite{Ko},
where he proved that these modules
with a fixed regular Whittaker function (Lie homomorphism) on a nilpotent
radical are (up to isomorphism) in bijective correspondence with
central characters of $\UU(\GG)$.
In recent years,   Whittaker modules for many other Lie algebras have been investigated
(see \cite{AHPY,ALZ,BM,BO,C,MD1,MD2}).
The notation of $\UU(\h)$-free modules was first introduced by Nilsson \cite{N} for the simple Lie algebra $\mathrm{sl}_{n+1}(\C)$. At the same time
these modules were introduced in a very different approach in the paper \cite{TZ}.
Later, $\UU(\h)$-free modules for many infinite-dimensional Lie algebras are determined, for example,
the Kac-Moody algebras in \cite{CTZ}, the Virasoro algebra in \cite{LZ2,TZ},
the Witt algebra in \cite{TZ}, the twisted Heisenberg-Virasoro algebra and $W(2,2)$ algebra in \cite{CG}, and so on.
In the present paper, we will study the Whittaker modules and $\UU(\C d_0)$-free modules over $\DD$.
Also, we study the tensor products of Whittaker modules and $\UU(\C d_0)$-free modules.

The paper is organized as follows. In Section 2, we recall  notations related to the
mirror  Heisenberg-Virasoro algebra $\DD$ and
collect some known results on Whittaker modules and $\UU(\C d_0)$-free modules over $\VV$,
including also two important lemmas on tensor product modules for later use.
In Section 3, we give the necessary and sufficient conditions for the Whittaker modules $W_{\varphi_m}$ over $\DD$
to be irreducible, see Theorem \ref{main3.5}.
In Section 4, we determine all $\DD$-module structures on $\UU(\C d_0)$,  and find  the    necessary and sufficient conditions for   these modules  to be irreducible,
see Theorem \ref{main4.2}.
In Section 5, we give the necessary and sufficient conditions for the tensor product of a Whittaker module $W_{\varphi_m}$
and a $\UU(\C d_0)$-free  $\DD$-module $\Omega(\lambda,\al,\be)$ to be irreducible, see Theorem \ref{main5.3}.
Furthermore, we show that two such tensor product modules are isomorphic if and only if the corresponding
Whittaker modules and $\UU(\C d_0)$-free  $\DD$-modules are isomorphic, see Theorem \ref{main5.4}. Consequently, we obtain a lot of new irreducible non-weight modules over $\DD$.

Throughout this paper, we denote by $\Z,\N,\Z_+, \C$ and $\C^*$
the set of integers, positive integers, non-negative integers, complex numbers and nonzero complex numbers respectively. All vector spaces and Lie algebras are over $\C$. We denote by $\UU{(\GG)}$ the universal enveloping algebra for a Lie algebra $\GG$.

\section{Notations and Preliminaries}
 For     convenience, in this section we recall some notations and collect some known results.

\begin{definition}\label{def.1}
The {\bf mirror  Heisenberg-Virasoro algebra} $\DD$ is a Lie algebra with basis\\
$\{d_m, h_r,{\bf{c}},{\bf{l}}~|~m\in\Z, r\in \frac{1}{2}+\Z\}$ subject to  the following commutation relations
\begin{equation*}
\begin{split}
&[d_m, d_n]=(m-n)d_{m+n}+\frac{m^3-m}{12}{\delta}_{m+n, 0}{{\bf{c}}},\\
&[d_m, h_r]=-rh_{m+r},                 \\
&[h_r, h_s]=r{\delta}_{r+s, 0}{\bf{l}},\\
&[{\bf{c}},\DD]=[{\bf{l}},\DD]=0,
\end{split}
\end{equation*}
for $m,n\in\Z, r,s\in \frac{1}{2}+\Z$.
\end{definition}
The Lie subalgebra spanned by $\{d_m, {\bf{c}}~|~ m\in\Z \}$ is the Virasoro algebra $\VV$,
and the Lie subalgebra spanned by $\{h_r, {\bf{l}}~|~r\in \frac{1}{2}+\Z \}$
is the twisted Heisenberg algebra $\HH$.
Moreover, $\HH$ is an ideal of $\DD$
and $\DD$ is the semi-direct product of $\VV$ and $\HH$.
It is clear that $\DD, \VV, \HH$ are all $\Z$-graded Lie algebras.


\begin{definition}
Let $\GG=\oplus_{i\in\Z}{\GG}_{i}$ (resp. $\GG=\oplus_{i\in\frac12\Z}{\GG}_{i}$) be a $\Z$ (resp.  $\frac12\Z$)-graded Lie algebra.
A $\GG$-module $V$ is called  the $\bf{restricted}$ module if for any $v\in V$ there exists $n\in\N$ such that $\GG_iv=0$,
for $i>n$ (resp. $i>\frac12n$). The  category of restricted modules over $\GG$ will be denoted as $\RR_{\GG}$.
\end{definition}
Note that if $V$ is a $\VV$-module,
then $V$ can be easily viewed as a $\DD$-module by defining $\HH V=0$,
the resulting  module is denoted by $V^{\DD}$.
Thanks to \cite{FLM}, for any $H\in\RR_{\HH}$ with the action of $\bf l$ as a nonzero scalar $l$, we can give $H$ a $\DD$-module structure
denoted by $H^{{\DD}}$ via the following map
\begin{align}
d_n&\mapsto \frac{1}{2l}\sum_{k\in\Z+\frac{1}{2}}h_{n-k}h_k,\quad\forall n\in\Z, n\neq0,\label{rep1}\\
d_0&\mapsto \frac{1}{2l}\sum_{k\in\Z+\frac{1}{2}}h_{-|k|}h_{|k|}+\frac{1}{16},\label{rep2}\\
h_r&\mapsto h_r,\quad\forall r\in\frac{1}{2}+\Z,
\quad {\bf{c}} \mapsto 1,
\quad {\bf{l}}\mapsto l.\label{rep3}
\end{align}

\begin{definition}
Let $\GG$ be a Lie algebra. Suppose $f$ is an automorphism of $\GG$ and $V$ is a $\GG$-module. 
The following actions
$$x\cdot v=f(x)v,\quad \forall x\in\GG,v\in V$$
give $V$ a new $\GG$-module structure, denoted by $V'.$ Then $V,V'$ are called {\bf equivalent} $\GG$-modules.
\end{definition}
\begin{remark}
It is easy to see that equivalent modules over the Lie algebra have the same irreducibility.
\end{remark}

For convenience, we define the following subalgebras. For any $m,n\in\Z_+$, set
\begin{equation*}
\begin{split}
&\DD^{(m, -n)}=\sum_{i\ge m}\C d_i\oplus \sum_{i\in\Z_+}\C h_{-n+i+\frac12}\oplus\C{\bf c}\oplus \C{\bf l}, \\
&\DD^{(m,-\infty)}=\sum_{i\in\Z_+}\C d_{m+i}\oplus \sum_{i\in\Z}\C h_{i+\frac12}+\C{\bf c}+\C{\bf l},\\
&\VV^{(m)}= \sum_{i\in\Z_+}\C d_{m+i}+\C{\bf c},\\
&\HH^{(m)}=\sum_{i\in\Z_+}\C h_{m+i+\frac12}+\C{\bf l}.
\end{split}
\end{equation*}

\begin{lemma}(cf, \cite[Theorem 6.2]{LPXZ})\label{main2.3}
	Let $V$ be an irreducible  ${\DD}^{(0, -q)}$-module for some $q\in \Z_+$ such that the action of {\bf c, l} on $V$ are the scalars  $c$ and $0$ respectively. Assume that there exists an integer $t\ge-q$ satisfying the following two conditions:
	\begin{enumerate}[$(a)$]
		\item the action of  $h_{t+\frac12}$ on $V$ is bijective;
		\item $h_{n+\frac{1}{2}}V=0=d_{n+q}V$ for all $n> t$.
	\end{enumerate}
	Then the induced ${\DD}$-module $\mathrm{Ind}_{\DD^{(0, -q)}}^{\DD}(V)$ is  irreducible.
\end{lemma}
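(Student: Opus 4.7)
The plan is to show that $\UU(\DD)w\supseteq 1\otimes V$ for every nonzero $w\in\mathrm{Ind}_{\DD^{(0,-q)}}^{\DD}(V)$; combined with the $\DD^{(0,-q)}$-irreducibility of $V$ and the fact that $1\otimes V$ generates the induced module as a $\DD$-module, this will yield irreducibility. By PBW, $\mathrm{Ind}_{\DD^{(0,-q)}}^{\DD}(V)\cong\UU(\DD^-)\otimes V$ as $\C$-vector spaces, where
$$\DD^-:=\bigoplus_{i\ge 1}\C d_{-i}\oplus\bigoplus_{j\ge 0}\C h_{-q-\frac{1}{2}-j}.$$
Any nonzero $w$ has a unique PBW decomposition $w=\sum X(\mathbf{a},\mathbf{b})\otimes v_{\mathbf{a},\mathbf{b}}$ with $X(\mathbf{a},\mathbf{b})=\prod d_{-i}^{a_i}\prod h_{-q-\frac{1}{2}-j}^{b_j}$. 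I would well-order the multi-indices first by total $d$-length $|\mathbf{a}|=\sum a_i$, then by total $h$-length $|\mathbf{b}|=\sum b_j$, and then by a reverse-lex refinement on the exponents; let $X_*$ be the resulting leading monomial with $v_*:=v_{\mathbf{a}_*,\mathbf{b}_*}\ne 0$.

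The central tool is the pair of stripping operators $h_{t+i+\frac{1}{2}}\in\DD^{(0,-q)}$ for $i\ge 1$ and $d_{t+q+j+1}\in\DD^{(0,-q)}$ for $j\ge 0$. By condition (b) both annihilate $V$, so for any $X\in\UU(\DD^-)$ and $v\in V$,
$$h_{t+i+\frac{1}{2}}\cdot(X\otimes v)=[h_{t+i+\frac{1}{2}},X]\otimes v,\qquad d_{t+q+j+1}\cdot(X\otimes v)=[d_{t+q+j+1},X]\otimes v.$$
Using the commutation relations of Definition \ref{def.1} one finds
$$[h_{t+i+\frac{1}{2}},d_{-i}]=(t+i+\tfrac{1}{2})\,h_{t+\frac{1}{2}},\qquad [d_{t+q+j+1},h_{-q-\frac{1}{2}-j}]=(q+\tfrac{1}{2}+j)\,h_{t+\frac{1}{2}},$$
so each stripping operation converts its target generator into an $h_{t+\frac{1}{2}}$-action on $V$, which by condition (a) is injective. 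Moreover, since ${\bf l}$ acts as $0$ on $V$, all $h_r$'s commute in action, which drastically simplifies later commutator expansions.

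For the reduction step, assume $|\mathbf{a}_*|>0$ and let $i$ be the largest index with $a_i>0$ in $X_*$. Applying $h_{t+i+\frac{1}{2}}$ to $w$ produces a principal term
$$(t+i+\tfrac{1}{2})\,a_i\,(X_*/d_{-i})\otimes h_{t+\frac{1}{2}}v_*,$$
of $d$-length $|\mathbf{a}_*|-1$, which is nonzero by (a). All secondary contributions have strictly smaller $d$-length: (i) terms involving $d_{-k}$ with $k>i$ do not occur by maximality of $i$; (ii) first-order commutators with $d_{-k}$, $k<i$, insert a factor $h_{t+i+\frac{1}{2}-k}$ of index greater than $t+\frac{1}{2}$, which by (b) either kills $V$ directly or only survives after further $d$-strippings during PBW normalization; (iii) iterated Leibniz commutators from $d_{-i}^{a_i}$ of order $s\ge 2$ consume $s$ copies of $d_{-i}$; and (iv) the normalization commutators arising when $h_{t+\frac{1}{2}}$ is pushed past the remaining $d$'s each consume an extra $d$-generator. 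In cases (ii)--(iv) at least two $d$-factors of $X_*$ are consumed, so the resulting $d$-length is at most $|\mathbf{a}_*|-2$; contributions from non-leading $X(\mathbf{a},\mathbf{b})$ in $w$ produce monomials distinct from $X_*/d_{-i}$ whenever they reach $d$-length $|\mathbf{a}_*|-1$. Hence the new leading monomial of $\UU(\DD)w$ is strictly smaller than $X_*$, with nonzero leading vector. The case $|\mathbf{a}_*|=0$ is treated analogously via $d_{t+q+j+1}$ for the largest $j$ with $b_j>0$; this $h$-reduction is cleaner because the $h_r$'s commute in action, so only the principal commutator contributes.

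Iterating, after finitely many reductions we obtain a nonzero element of $\UU(\DD)w$ supported in $1\otimes V$; the $\DD^{(0,-q)}$-irreducibility of $V$ then gives $1\otimes V\subseteq\UU(\DD)w$, whence $\UU(\DD)w=\mathrm{Ind}_{\DD^{(0,-q)}}^{\DD}(V)$. \textbf{The main obstacle} is the bookkeeping in cases (iii)--(iv): the iterated $h$-factors $h_{t+\frac{1}{2}-ri}$ may land in $\DD^{(0,-q)}$ or in $\DD^-$ depending on whether $t+\frac{1}{2}-ri\ge -q+\frac{1}{2}$ (the boundary case being $t=-q$), so they either disappear into $V$-actions or add fresh negative $h$-generators; what saves the argument is that each such term still consumes at least two $d$-factors, so a well-ordering prioritizing $d$-length forces termination. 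Conditions (a) and (b) play complementary essential roles throughout: (b) makes the stripping operators act purely through commutators, and (a) guarantees that the principal $h_{t+\frac{1}{2}}v_*$ is nonzero at every reduction step.
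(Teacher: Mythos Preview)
The paper does not give its own proof of this lemma; it simply quotes it as \cite[Theorem~6.2]{LPXZ}.  So there is nothing in the present paper to compare your argument against, and your PBW--stripping strategy is exactly the standard method by which such results are proved (and is presumably what appears in the cited reference).

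That said, your choice of well-ordering has a genuine weak spot.  You order first by $|\mathbf a|$, then by $|\mathbf b|$, and only then by a reverse-lex refinement.  Because $|\mathbf b|$ sits between $|\mathbf a|$ and the lex comparison on $\mathbf a$, a non-leading term $X'$ with $|\mathbf a'|=|\mathbf a_*|$ but $|\mathbf b'|<|\mathbf b_*|$ may carry a factor $d_{-k}$ with $k>i$ (your ``case~(i)'' dismissal only applies to the leading monomial).  When you apply $h_{t+i+\frac12}$, the commutator $[h_{t+i+\frac12},d_{-k}]=(t+i+\tfrac12)h_{t+i-k+\frac12}$ has index $<t+\tfrac12$; in particular, in the boundary case $t=-q$ this $h$-factor can land in $\DD^-$, raising the $h$-length of $X'/d_{-k}$ by one and potentially producing exactly the monomial $X_*/d_{-i}$ at $d$-length $|\mathbf a_*|-1$.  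Your blanket claim that non-leading terms ``produce monomials distinct from $X_*/d_{-i}$'' is therefore not justified as stated.

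The repair is to choose an ordering in which the reverse-lex comparison on the $d$-exponents comes \emph{before} any comparison involving $|\mathbf b|$---for instance, order by $|\mathbf a|$, then reverse-lex on $\mathbf a$, then $|\mathbf b|$, then reverse-lex on $\mathbf b$.  With that ordering, every term of the same $|\mathbf a|$ has largest $d$-index at most $i$, so the only commutator landing at index $t+\tfrac12$ is the one with $d_{-i}$, and the only $d$-length $|\mathbf a_*|-1$ contributions at the monomial $X_*/d_{-i}$ come from terms whose $d$-part already equals $(X_*)_d$; distinctness of the $h$-parts then finishes the collision analysis.  The rest of your outline (and in particular the $h$-stripping step via $d_{t+q+j+1}$, which is indeed clean because the $h_r$'s commute in action) is fine.
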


Recall that for any $m\in\Z_+$, let $\psi_m: \VV^{(m)}\rightarrow \C$ be a Whittaker function, i.e., a Lie algebra homomorphism.
Then we have the one-dimensional module $\C w_{\psi_m}$ over $\VV^{(m)}$
with $x\cdot w_{\psi_m}=\psi_m(x)w_{\psi_m}$, for $x\in\VV^{(m)}$.
The induced $\VV$-module
\begin{equation}\label{whittaker-1}
W_{\psi_m}=\Ind_{\VV^{(m)}}^{\VV} \C w_{\psi_m}
\end{equation}
is called the {\bf  Whittaker module} over $\VV$ with respect to $\psi_m$.

\begin{lemma}(cf, \cite[Theorem 7]{LGZ}) \label{main2.5}
 For any $m\in\N$, and any Whittaker function $\psi_m: \VV^{(m)}\rightarrow \C$, then the Whittaker module $W_{\psi_m}$ is irreducible if and only if $(\psi(d_{2m}),\psi(d_{2m-1}))\ne (0,0)$.
\end{lemma}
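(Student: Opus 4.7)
The plan is to exploit the PBW filtration on $W_{\psi_m} \cong \UU(\VV_-) w_{\psi_m}$, where $\VV_- := \mathrm{span}\{d_i : i < m\}$, with a length-filtration argument tailored to each direction.

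\textbf{Reducibility direction} ($\Rightarrow$). Assume $\psi_m(d_{2m}) = \psi_m(d_{2m-1}) = 0$; I would exhibit a proper submodule by producing a second Whittaker vector. Set $v_1 := d_{m-1} w_{\psi_m}$. For every $k \geq m$ a direct commutation gives
\begin{equation*}
d_k v_1 = \psi_m(d_k)\, v_1 + (k - m + 1)\, \psi_m(d_{k + m - 1})\, w_{\psi_m},
\end{equation*}
and the rightmost term vanishes in every case: for $k = m$ it is $\psi_m(d_{2m-1}) = 0$, for $k = m+1$ it is $2\,\psi_m(d_{2m}) = 0$, and for $k \geq m+2$ one has $k + m - 1 \geq 2m+1$, so $d_{k+m-1} \in [\VV^{(m)}, \VV^{(m)}]$ forces $\psi_m(d_{k+m-1}) = 0$ automatically. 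Hence $v_1$ is a Whittaker vector of type $\psi_m$, and the universal property of $W_{\psi_m}$ produces a unique $\VV$-homomorphism $\phi : W_{\psi_m} \to W_{\psi_m}$ sending $w_{\psi_m} \mapsto v_1$. Passing to the PBW-associated graded $\mathrm{gr}\, W_{\psi_m} \cong \C[d_i : i < m]$, the induced map $\mathrm{gr}(\phi)$ is multiplication by the variable $d_{m-1}$, which is injective with image the proper principal ideal $(d_{m-1})$. Consequently $\phi$ is injective with proper image, and $\UU(\VV)\, v_1 = \mathrm{im}(\phi)$ is a proper nonzero submodule of $W_{\psi_m}$.

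\textbf{Irreducibility direction} ($\Leftarrow$). Assume $(\psi_m(d_{2m}), \psi_m(d_{2m-1})) \neq (0,0)$. The strategy is to show by induction on the leading PBW-length $k$ of $v$ that $w_{\psi_m} \in \UU(\VV)\, v$ for every nonzero $v \in W_{\psi_m}$; the case $k = 0$ is immediate. For $k \geq 1$, the central identity is that for $n \geq 2m+1$ (so $\psi_m(d_n) = 0$) one has
\begin{equation*}
d_n \cdot (d_{i_1} \cdots d_{i_k} w_{\psi_m}) \equiv \sum_{j=1}^{k} (n - i_j)\, \psi_m(d_{n+i_j})\, d_{i_1} \cdots \widehat{d_{i_j}} \cdots d_{i_k} w_{\psi_m} \pmod{F^{k-2}},
\end{equation*}
where $F^\bullet$ is the PBW filtration and only summands with $n + i_j \in \{m, \ldots, 2m\}$ are nonzero. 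Let $i^* := \min\{\min(I) : c_I \neq 0,\ |I| = k\}$ over the length-$k$ multi-indices with nonzero coefficient in $v$, and set $n := 2m - i^*$ if $\psi_m(d_{2m}) \neq 0$, or $n := 2m - 1 - i^*$ otherwise. For every relevant $I$ and every position $j$, $i_j \geq i^*$, so $n + i_j \geq 2m$ (resp.\ $2m - 1$), with the next value $n + i_j = 2m+1$ (resp.\ $2m$) already out of the support of $\psi_m$; hence only multi-indices $I$ with $\min(I) = i^*$ contribute to length $(k-1)$, and each such $I$ contributes the distinct reduced monomial $d_{I \setminus i^*} w_{\psi_m}$ with a nonzero scalar, so no cancellation occurs and the length-$(k-1)$ component of $d_n v$ is nonzero. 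Induction then produces $w_{\psi_m} \in \UU(\VV) v$, and cyclicity of $w_{\psi_m}$ yields $\UU(\VV)\, v = W_{\psi_m}$.

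\textbf{Main obstacle.} The most delicate ingredient is establishing the displayed congruence modulo $F^{k-2}$: one must track the iterated commutators $[d_n, d_{i_j}]$ and then $[d_{n+i_j}, d_{i_l}]$ as $d_{n+i_j}$ is moved past the remaining factors to act on $w_{\psi_m}$, and check that every secondary term either produces a length-$(k-1)$ monomial via a single $\psi_m$-evaluation or drops into $F^{k-2}$. Once this bookkeeping is in place, the clean choice of $n$ from $i^*$ ensures no cancellation among the length-$k$ monomials of $v$, and the induction runs without obstruction.
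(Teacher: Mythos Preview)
The paper does not prove this lemma; it is quoted from \cite{LGZ} without proof, so there is no argument here to compare against. I can only assess your proposal on its own merits.

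Your reducibility direction is correct. The irreducibility direction, however, has a genuine gap exactly where you flag the ``main obstacle''. The displayed congruence
\[
d_n\cdot(d_{i_1}\cdots d_{i_k}w_{\psi_m})\equiv\sum_{j}(n-i_j)\,\psi_m(d_{n+i_j})\,d_{i_1}\cdots\widehat{d_{i_j}}\cdots d_{i_k}\,w_{\psi_m}\pmod{F^{k-2}}
\]
is false in general. When you push $d_{n+i_j}$ past $d_{i_l}$ you pick up $(n+i_j-i_l)\,d_{n+i_j+i_l}$; if $n+i_j+i_l<m$ this new factor lies in $\VV_-$, so the resulting word has $k-1$ factors all in $\VV_-$ and sits in $F^{k-1}\setminus F^{k-2}$, not in $F^{k-2}$. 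Concretely, take $m=1$, $\psi_1(d_2)=1$, let $i^*=-10$ (forced by some other monomial in $v$) so $n=12$, and let $I=(-9,-5)$. Then
\[
d_{12}\,d_{-9}d_{-5}w=21\,d_3d_{-5}w+17\,d_{-9}d_7w=21\bigl(\psi_1(d_3)d_{-5}w+8\,d_{-2}w\bigr)+0=168\,d_{-2}w,
\]
which is a nonzero length-$1$ element; your formula predicts it lies in $F^0$ since $\psi_1(d_3)=\psi_1(d_7)=0$. Thus multi-indices with $\min(I)>i^*$ \emph{can} contribute to length $k-1$, and your ``distinct reduced monomials, no cancellation'' claim collapses. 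There is also a smaller inconsistency: you state the identity for $n\ge 2m+1$ but then use $n=2m-i^*$ (or $2m-1-i^*$), which is $\le 2m$ whenever $i^*\ge 0$; this is patched by replacing $d_n$ with $d_n-\psi_m(d_n)$, but the structural problem above remains.

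A correct length-reduction argument needs a finer order than bare PBW length---for example, refine by the total degree $\sum i_j$ or use a reverse-lexicographic order on the exponent tuples---so that the secondary commutator contributions, while still length $k-1$, are strictly smaller than the surviving ``primary'' term $d_{I\setminus i^*}w_{\psi_m}$ in the refined order. Once such an order is in place your induction scheme goes through; as written, the key step is unjustified.
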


 For $\lambda\in\C^*,\al\in\C$, denote by $\Omega(\lambda, \al)=\C[t]$ the polynomial algebra over $\C$.
It is well-known that $\Omega(\lambda, \al)$ is a $\VV$-module with the actions
$${\bf c}(f(t))=0,\ \ d_m(f(t))=\lambda^m(t+m\al)f(t+m), ~\forall~ m\in\Z.$$
 Thanks to \cite{LZ2}, we know that $\Omega(\lambda, \al)$ is irreducible if and only if $\al\ne 0$,
and $\Omega(\lambda, 0)$ has an irreducible submodule $t\Omega(\lambda, \al)$.
Moreover, we have the following lemma.
\begin{lemma}(cf, \cite[Theorem 3]{TZ}) \label{main2.6}
Let $V$ be a $ \VV$-module.
Assume that $V$ viewed as a $\UU(\C d_0)$-module is free of rank $1$. Then $V\cong \Omega(\lambda, \al)$ for some $\lambda\in\C^*,\al\in\C$.
\end{lemma}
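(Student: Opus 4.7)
The plan is to realise $V$ on the polynomial ring $\C[d_0]$ and translate the Virasoro commutation relations into polynomial identities that pin down the action. Fix a $\UU(\C d_0)$-basis $v_0$ of $V$, so that $V = \C[d_0] v_0$, and write $d_m v_0 = g_m(d_0) v_0$ and $\mathbf{c} v_0 = p(d_0) v_0$ for unique polynomials $g_m, p \in \C[d_0]$. The relation $[d_0, d_m] = -m d_m$ gives the identity $d_m f(d_0) = f(d_0+m) d_m$ in $\UU(\VV)$, whence $d_m(f(d_0) v_0) = f(d_0+m) g_m(d_0) v_0$ for every $f$. Applying $\mathbf{c} d_m = d_m \mathbf{c}$ to $v_0$ yields $(p(d_0+m) - p(d_0))\, g_m(d_0) = 0$, so once some $g_m$ with $m \ne 0$ is shown to be nonzero, $p$ must reduce to a constant scalar $c$.

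Next, the relation $[d_m, d_n] = (m-n) d_{m+n} + \tfrac{m^3-m}{12}\delta_{m+n,0}\mathbf{c}$ applied to $v_0$ becomes the functional equation
$$g_n(d_0 + m) g_m(d_0) - g_m(d_0 + n) g_n(d_0) = (m-n) g_{m+n}(d_0) + \tfrac{m^3-m}{12}\,\delta_{m+n,0}\, c. \qquad (\ast)$$
Specialising $(\ast)$ to $(m,n)=(1,-1)$, where the central term vanishes, gives $g_{-1}(d_0+1) g_1(d_0) - g_1(d_0-1) g_{-1}(d_0) = 2 d_0$. The leading terms of the two products cancel identically, and a subleading-coefficient count forces $\deg g_1 + \deg g_{-1} = 2$ with both polynomials nonzero. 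Eliminating the lopsided distributions in $\{(2,0),(0,2)\}$ via $(\ast)$ at $(m,n)=(2,-2)$ and $(1,-2)$ pins down $\deg g_1 = \deg g_{-1} = 1$. Writing $g_1(d_0) = \l(d_0 + \al)$ with $\l \in \C^*$ and $\al \in \C$, the same identity determines $g_{-1}(d_0) = \l^{-1}(d_0 - \al)$. Using $(\ast)$ with $n=1$ and inducting on $|m|$ then yields $g_m(d_0) = \l^m(d_0 + m\al)$ for every $m \in \Z$; substituting these into $(\ast)$ at $m+n=0$, $|m|\ge 2$, forces $c = 0$. The $\C$-linear bijection $\Omega(\l,\al) \to V$, $f(t) \mapsto f(d_0) v_0$, intertwines the two $\VV$-actions by construction, producing the claimed isomorphism.

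The main obstacle is the degree analysis in the paragraph above. Because the leading terms of the two products in $(\ast)$ cancel automatically, one must extract the subleading coefficient, which works out to $(\deg g_1 + \deg g_{-1})$ times the product of the leading coefficients of $g_{\pm 1}$; this controls only the sum of the degrees. Ruling out the unbalanced cases requires invoking higher instances of $(\ast)$ for $|m|\ge 2$, where the recursion would force $\deg g_{\pm m}$ to grow in a way incompatible with the at-most-linear right-hand side. Once this is settled, everything downstream is clean algebraic bookkeeping: the recursion $(\ast)$ with $n=1$ determines all the $g_m$ inductively, and a single substitution kills the central scalar.
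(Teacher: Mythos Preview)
The paper does not prove Lemma~\ref{main2.6}; it merely quotes the result from \cite[Theorem~3]{TZ}. So there is no in-paper argument to compare against, and your proposal is an independent proof sketch. The overall strategy---translating the module action into polynomials $g_m$ via $d_m f(d_0)=f(d_0+m)d_m$, and then solving the functional equations $(\ast)$---is the standard route for this type of classification and is essentially what the cited reference does.

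That said, there is a genuine gap in your inductive step. You write ``Using $(\ast)$ with $n=1$ and inducting on $|m|$ then yields $g_m(d_0)=\lambda^m(d_0+m\alpha)$ for every $m\in\Z$.'' But taking $n=1$ in $(\ast)$ gives $(m-1)g_{m+1}=g_1(d_0+m)g_m-g_m(d_0+1)g_1$, which for $m=1$ is the tautology $0=0$ and therefore yields no information about $g_2$. Without $g_2$ you cannot start the upward induction $g_2\to g_3\to\cdots$. The fix is to determine $g_2$ from the relation $(m,n)=(2,-1)$, i.e.\ $3g_1=g_{-1}(d_0+2)g_2-g_2(d_0-1)g_{-1}$, and to argue that this linear functional equation has $\lambda^2(d_0+2\alpha)$ as its \emph{unique} polynomial solution (compare leading and subleading coefficients of a hypothetical second solution to force it to vanish). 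The same uniqueness issue appears on the negative side: the relation $(m,1)$ for $m\le -2$ is an implicit equation for $g_m$ in terms of the already-known $g_{m+1}$, and you must show it determines $g_m$ uniquely. A cleaner bookkeeping is to use $n=-1$ for the upward direction ($m\ge 2$ gives $(m+1)g_{m-1}=\cdots$, recovering $g_2$ from $g_3$ is the wrong way; rather use $(2,-1)$ once to pin $g_2$, then $(m,1)$ for $m\ge 2$) and symmetrically for $m\le -2$. Once $g_{\pm 2}$ are established, the rest of your argument (including the verification that $c=0$) goes through as written.
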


We conclude this section by recalling two results about tensor product for later use.
\begin{lemma}(cf, \cite[Lemma 8]{LZ3})\label{main2.7}
Let $V$ be a module over a Lie algebra $\GG$, $\mathfrak{g}$ be a subalgebra of $\GG$, and $W$ be a $\mathfrak{g}$-module.
Then the $\GG$-module homomorphism $\tau:
\Ind_{\mathfrak{g}}^{\GG}(V\otimes W)\rightarrow V\otimes
\Ind_{\mathfrak{g}}^{\GG}(W)$ induced from the inclusion map $V\otimes
W\rightarrow V\otimes \Ind_{\mathfrak{g}}^{\GG}(W)$ is a $\GG$-module isomorphism.
\end{lemma}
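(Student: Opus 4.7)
The plan is to prove that $\tau$ is an isomorphism by constructing an explicit two-sided inverse using the Hopf algebra structure on $\UU(\GG)$.

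First, I would verify that $\tau$ is well defined. By the universal property of the induced module, it suffices to check that the map $V\otimes W\to V\otimes \Ind_{\mathfrak{g}}^{\GG}(W)$ sending $v\otimes w$ to $v\otimes(1\otimes w)$ is $\mathfrak{g}$-linear; this is immediate from the diagonal $\mathfrak{g}$-action on $V\otimes W$ together with the fact that $\mathfrak{g}$ acts on $1\otimes w\in \Ind_{\mathfrak{g}}^{\GG}(W)$ as it does on $w\in W$.

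Next, I would define a candidate inverse $\sigma:V\otimes \Ind_{\mathfrak{g}}^{\GG}(W)\to \Ind_{\mathfrak{g}}^{\GG}(V\otimes W)$ by
\[
\sigma\bigl(v\otimes(u\otimes w)\bigr)\;=\;\sum_{(u)} u_{(2)}\otimes\bigl(S(u_{(1)})\cdot v\otimes w\bigr),
\]
where $\Delta u=\sum u_{(1)}\otimes u_{(2)}$ is the standard coproduct on $\UU(\GG)$ and $S$ is its antipode. Two things must be checked: (a) $\sigma$ respects the balancing over $\UU(\mathfrak{g})$, i.e.\ $\sigma(v\otimes(ug\otimes w))=\sigma(v\otimes(u\otimes g\cdot w))$ for each $g\in\mathfrak{g}$, which follows from $\Delta(ug)=\sum u_{(1)}g\otimes u_{(2)}+\sum u_{(1)}\otimes u_{(2)}g$ together with $S(g)=-g$; and (b) $\tau\circ\sigma=\mathrm{id}$ and $\sigma\circ\tau=\mathrm{id}$, which come from the antipode identity $\sum u_{(1)}S(u_{(2)})=\epsilon(u)=\sum S(u_{(1)})u_{(2)}$ combined with the cocommutativity of $\UU(\GG)$.

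The main technical obstacle is the bookkeeping in step (b): expanding $\tau\sigma(v\otimes(u\otimes w))$ produces a triple sum involving the iterated coproduct $\Delta^{(2)}(u)=\sum u_{(1)}\otimes u_{(2)}\otimes u_{(3)}$ of the form $\sum u_{(2)}S(u_{(1)})v\otimes(u_{(3)}\otimes w)$, which one must collapse by first using cocommutativity to interchange the first two Sweedler factors and then applying the identity $\sum u_{(1)}S(u_{(2)})\otimes u_{(3)}=1\otimes u$ in $\UU(\GG)\otimes\UU(\GG)$. Nothing here is conceptually deep, but the Sweedler-notation manipulations demand care. As an alternative, more hands-on route, one could fix a vector space complement $\mathfrak{m}$ of $\mathfrak{g}$ in $\GG$, invoke PBW to write $\UU(\GG)\cong\UU(\mathfrak{m})\otimes\UU(\mathfrak{g})$ as a right $\UU(\mathfrak{g})$-module, identify both sides of $\tau$ as vector spaces with $V\otimes\UU(\mathfrak{m})\otimes W$ (up to a swap of tensor factors), and verify by induction on PBW degree that $\tau$ is the expected bijection.
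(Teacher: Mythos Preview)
The paper does not give its own proof of this lemma; it is quoted from \cite[Lemma~8]{LZ3} and stated without argument in the preliminaries. Your proposal is correct and complete: the Hopf-algebra construction of an explicit inverse via the antipode is a standard proof of this ``tensor identity'' for induced modules, and the bookkeeping you outline (well-definedness over $\UU(\mathfrak{g})$, then collapsing $\sum u_{(2)}S(u_{(1)})\otimes u_{(3)}$ to $1\otimes u$ using cocommutativity and the antipode axiom) goes through exactly as you describe. Your alternative PBW route---identifying both sides with $V\otimes \UU(\mathfrak{m})\otimes W$ after choosing a complement $\mathfrak{m}$---is also valid and is in fact how this result is often checked in the Lie-algebra literature, so either argument would be acceptable here.
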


\begin{lemma}(cf, \cite[Lemma 3.1]{GZ})\label{main2.8}
Suppose $V, W$ are  $\VV$-modules, and $H,K\in \RR_{\HH}$ are irreducible with nonzero action of $\bf{l}$.
Then
 \begin{enumerate}[$(1)$]
 \item  any  $\DD$-submodule of  $V^{\DD}\otimes H^{\DD}$ is of the
form $(V')^{\DD}\otimes  H^{\DD}$ for some $\VV$-submodule $V'$ of
$V$. In particular, $V^{\DD}\otimes H^{\DD}$ is an irreducible $\DD$-module
if and only if $V$ is an irreducible $\VV$-module;
 \item $V^{\DD}\otimes H^{\DD}\cong W^{\DD}\otimes K^{\DD}$ if and
only if $V\cong W$ and $H\cong K$.
 \end{enumerate}
\end{lemma}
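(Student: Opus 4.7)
The key idea is the \emph{Sugawara twist}. For any $H\in\RR_{\HH}$ with ${\bf l}$ acting by $l\neq 0$, the normal-ordered infinite sum $L_n:=\frac{1}{2l}\sum_{k\in\frac{1}{2}+\Z}h_{n-k}h_k$ (suitably interpreted as in (2.1)--(2.2)) acts on $H$, satisfies $[L_n,h_r]=-rh_{n+r}$, and implements the Virasoro action of $d_n$ on $H^{\DD}$. Since $\HH$ annihilates $V^{\DD}$, each $L_n$ vanishes on $V^{\DD}$, and on the tensor product $V^{\DD}\otimes H^{\DD}$ the difference $d_n-L_n$ acts as $d_n\otimes 1$ and commutes with the $\HH$-action (because $[d_n,h_r]=[L_n,h_r]$). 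This twisted operator is the technical device underlying both parts.

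For part (1), let $M$ be a $\DD$-submodule of $V^{\DD}\otimes H^{\DD}$. As an $\HH$-module, $V^{\DD}\otimes H^{\DD}$ is isotypic of type $H$ (since $\HH$ acts trivially on $V^{\DD}$), and since $\mathrm{End}_{\HH}(H)=\C$ by Dixmier's lemma, its $\HH$-submodules are precisely those of the form $V'\otimes H$ for subspaces $V'\subseteq V$. Hence $M=V'\otimes H$ for some subspace $V'$. Because $L_n\in\UU(\HH)$ preserves $M$, so does $d_n-L_n=d_n\otimes 1$; evaluating on $v\otimes h$ for any fixed $0\neq h\in H$ forces $d_nV'\subseteq V'$, so $V'$ is a $\VV$-submodule of $V$. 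The irreducibility consequence is immediate.

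For part (2), the ``if'' direction is clear. For the converse, suppose $\Phi:V^{\DD}\otimes H^{\DD}\to W^{\DD}\otimes K^{\DD}$ is a $\DD$-isomorphism. Restricting to $\HH$ and comparing $\HH$-isotypic types gives $H\cong K$. To recover $V\cong W$, apply the functor $\mathrm{Hom}_{\HH}(H,-)$: since $H$ is cyclic irreducible with $\mathrm{End}_{\HH}(H)=\C$, one has a canonical isomorphism $\mathrm{Hom}_{\HH}(H,V^{\DD}\otimes H^{\DD})\cong V$ via $v\mapsto(h\mapsto v\otimes h)$. The $\HH$-linear operator $d_n-L_n$ on $V^{\DD}\otimes H^{\DD}$ induces a $\VV$-action on this $\mathrm{Hom}$-space which, under the identification, is precisely the original $\VV$-action on $V$. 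Applying the same construction on the right-hand side, $\Phi$ then induces a $\VV$-isomorphism $V\cong W$.

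The main technical obstacle is verifying the Sugawara commutation $[L_n,h_r]=-rh_{n+r}$ on restricted modules with nonzero ${\bf l}$, which requires careful bookkeeping for the normal-ordered sum (the $n=0$ case in particular uses the shift by $\frac{1}{16}$ in (2.2), though this affects only the constant and not the commutation); this is the content of \cite{FLM}. A second point is $\mathrm{End}_{\HH}(H)=\C$, which follows from Dixmier's lemma provided $H$ has countable dimension over $\C$; this is guaranteed since every irreducible restricted $\HH$-module with nonzero ${\bf l}$ is a Fock module.
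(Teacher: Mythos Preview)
The paper does not prove this lemma; it merely cites \cite[Lemma~3.1]{GZ}. Your argument via the Sugawara twist $d_n-L_n=d_n\otimes 1$ is correct and is the standard approach, almost certainly the one used in the cited reference (indeed the paper's equations (2.1)--(2.3) already set up exactly this Sugawara structure on $H^{\DD}$).

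Two small points of hygiene. First, $L_n$ is not literally an element of $\UU(\HH)$ but rather acts on restricted $\HH$-modules as a locally finite sum; the correct statement is that on any fixed vector of $M$ the operator $L_n$ agrees with some element of $\UU(\HH)$, and this suffices to conclude $L_n M\subseteq M$. Second, in part~(2) you should make explicit that after fixing an $\HH$-isomorphism $H\cong K$, the operator $L_n$ on the right-hand side (built from the $h_r$ acting on $W^{\DD}\otimes K^{\DD}$) is intertwined with the $L_n$ on the left by $\Phi$, since $\Phi$ is $\HH$-linear; hence $\Phi$ intertwines $d_n-L_n$ on both sides, giving the $\VV$-linearity of the induced map $V\to W$. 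Your sketch says this, but the identification of $K$ with $H$ should precede the application of $\mathrm{Hom}_{\HH}(H,-)$.
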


\section{Whittaker modules over $\DD$}
In this section, we will determine the necessary and sufficient conditions for the Whittaker modules over $\DD$
to be irreducible.

Assume that $\phi:\HH^{(0)}\rightarrow \C$ is a Whittaker function,
and that $\C w_{\phi}$ is the one-dimensional module over $\HH^{(0)}$
defined by  $x\cdot w_{\phi}=\phi(x)w_{\phi}$, for $x\in\HH^{(0)}$.
Then for the Whittaker  $\HH$-module
$$W_{\phi}=\Ind_{\HH^{(0)}}^{\HH} \C w_{\phi},$$
we have the following result.
\begin{lemma}\label{main3.1}
The Whittaker  $\HH$-module $W_{\phi}$ is irreducible if and only if $\phi({\bf l})\ne 0$.
\end{lemma}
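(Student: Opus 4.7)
The plan is to analyze both implications using the PBW decomposition $W_\phi \cong \C[h_{-1/2}, h_{-3/2}, h_{-5/2}, \ldots] w_\phi$ and treat ${\bf l}$ as a scalar $l = \phi({\bf l})$.

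For the ``only if'' direction, I would take the contrapositive and suppose $\phi({\bf l})=0$. Then ${\bf l}$ annihilates $W_\phi$, and the bracket $[h_r,h_s] = r\delta_{r+s,0}{\bf l}$ shows that all the $h_r$'s act on $W_\phi$ by pairwise commuting operators. For $r>0$ one has $h_r\cdot w_\phi = \phi(h_r)w_\phi$, and since $h_r$ commutes with every $h_{-s}$ on $W_\phi$, the operator $h_r$ acts on all of $W_\phi$ as the scalar $\phi(h_r)$. Thus the only nontrivial action is multiplication by the variables $h_{-(2k-1)/2}$, and the proper submodule $\sum_{k\ge 1} h_{-(2k-1)/2}W_\phi$ is clearly nonzero and proper (it is the augmentation ideal of the polynomial ring). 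So $W_\phi$ is reducible.

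For the ``if'' direction, assume $l\neq 0$. The key observation is that for any positive half-integer $r=(2k-1)/2$, the relation $[h_r,h_{-r}]=r{\bf l}$ gives, by induction,
\begin{equation*}
h_r\,h_{-r}^{\,b} = h_{-r}^{\,b}h_r + b\,r\,l\,h_{-r}^{\,b-1},
\end{equation*}
while $h_r$ commutes with $h_{-s}$ for $s\ne r$. Combined with $h_rw_\phi=\phi(h_r)w_\phi$, this shows that on $W_\phi\cong\C[h_{-1/2},h_{-3/2},\ldots]w_\phi$, the operator $h_r-\phi(h_r)$ acts as the scaled partial derivative $r\,l\,\partial/\partial h_{-r}$. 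Define the total degree of a nonzero $v\in W_\phi$ to be the maximum of $\sum a_i$ over monomials $h_{-1/2}^{a_1}h_{-3/2}^{a_2}\cdots w_\phi$ occurring in $v$ with nonzero coefficient.

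Now let $N$ be a nonzero $\HH$-submodule and pick $v\in N\setminus\{0\}$ of minimal total degree $d$. If $d=0$, then $v$ is a nonzero scalar multiple of $w_\phi$, so $w_\phi\in N$ and $N=W_\phi$. If $d>0$, choose a top-degree monomial $M_0=h_{-r_1}^{a_1}\cdots h_{-r_n}^{a_n}w_\phi$ occurring in $v$ with some $a_j>0$, and apply $h_{r_j}-\phi(h_{r_j})\in\UU(\HH)$ to $v$. By the derivative formula this strictly decreases the total degree, and the coefficient of $M_0/h_{-r_j}$ in the result is $c_{M_0}\cdot a_j\cdot r_j\cdot l$, which is nonzero because $l\neq 0$, $r_j>0$, and $a_j>0$; moreover no cancellation occurs because distinct monomials of the original top degree produce distinct monomials after differentiating by $h_{-r_j}$. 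Iterating drives the degree down to $0$ while staying in $N$, forcing $w_\phi\in N$, hence $N=W_\phi$.

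The only real obstacle is the non-cancellation claim in the inductive step, and this is handled by the observation that the monomial $M_0/h_{-r_j}$ uniquely determines its preimage $M_0$ under multiplication by $h_{-r_j}$, so contributions from different top monomials land in different basis vectors. Everything else is a routine PBW/degree computation.
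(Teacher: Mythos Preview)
Your proof is correct and follows essentially the same approach as the paper's: for $l\neq 0$ the paper simply writes ``it is not hard to see $w_\phi\in\langle v\rangle$,'' and your degree-lowering argument via $h_r-\phi(h_r)\sim rl\,\partial/\partial h_{-r}$ is exactly the computation that fills in that line. For $l=0$ the paper exhibits the single Whittaker vector $h_{-1/2}w_\phi$ as generating a proper submodule, while you use the augmentation ideal, but these are minor variants of the same idea.
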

\begin{proof}
We first assume that $\phi({\bf l})\ne 0$.
For any nonzero $v\in W_{\phi}$, $v$ is a linear combination of vectors in the form
$$h_{-r-\frac12}^{i_r}\cdots h_{-1-\frac12}^{i_1}h_{-\frac12}^{i_0}w_{\phi},$$
where $i_0,i_1,\cdots, i_r\in\Z_+$ by PBW Theorem.
Let $\< v\>$ denote the submodule generated by $v$ of $W_{\phi}$.
It is not hard to see $w_{\phi}\in \<v\>$. Thus $\<v\>=W_{\phi}$, which implies that $W_{\phi}$ is irreducible.

Now we assume that $\phi({\bf l})=0$. Then it is easy to see that
$h_{-\frac12}w_{\phi}$ is a Whittaker vector, which implies that $h_{-\frac12}w_{\phi}$ generates a nonzero proper submodule of $W_{\phi}$.  So $W_{\phi}$ is not irreducible.
\end{proof}

Now for any $m\in \Z_+$, let $\varphi_m:\DD^{(m, 0)}\rightarrow \C$
be a Whittaker function, and $\C w_{\varphi_m}$ be the one-dimensional
module over $\DD^{(m, 0)}$ defined by  $x\cdot w_{\varphi_m}=\varphi_m(x)w_{\varphi_m}$, for $x\in\DD^{(m, 0)}$.
In the rest of this section, we will determine the irreducibility of the Whittaker $\DD$-module
$$W_{\varphi_m}=\Ind_{\DD^{(m, 0)}}^{\DD} \C w_{\varphi_m}.$$
Each irreducible quotient of $W_{\varphi_m}$ (if it exists) is called the irreducible Whittaker $\DD$-module with respect to $\varphi_m$.
It is clear that $$\varphi_m(d_{2m+j})=\varphi_m(h_{m+j-\frac12})=0,\,\,\forall  j\in\N,$$
 since $\varphi_m([\DD^{(m, 0)},\DD^{(m, 0)}])=0$.
In particular, if $m=0$, then $W_{\varphi_m}$ is a Verma module,
which has been studied in \cite{LPXZ}.

First we consider $W_{\varphi_m}$ with $\varphi_m({\bf l})=l\ne 0$. We define a new Whittaker function
$\varphi'_m: \VV^{(m)}  \rightarrow \C$ as follows
\begin{eqnarray*}
&&\varphi'_m({\bf c})=\varphi_m({\bf c})-1,\\
&&\varphi'_m(d_{k})=0,\quad\forall k\ge 2m+1,\\
&&\varphi'_m(d_k)=\varphi_m(d_k)-\frac{1}{2l}\sum_{i=0}^{m-1}
\varphi_m(h_{i+\frac12})\varphi_m(h_{k-i-\frac12})-\delta_{0,k}\frac{1}{16},\,\, \forall m\le k\le 2m.
\end{eqnarray*}
Then we have  the Whittaker $\VV$-module
\begin{equation*}
{W}_{\varphi'_m}=\Ind_{\VV^{(m)}}^{\VV} \C w_{\varphi'_m},
\end{equation*}
where $\C w_{\varphi'_m}$ is the one-dimensional module over $\VV^{(m)}$ defined by  $x\cdot
w_{\varphi'_m}=\varphi'_m(x)w_{\varphi'_m}$, for $x\in \VV^{(m)}$.

\begin{proposition}\label{prop.1}
Suppose that $m\in \Z_+$, and $\varphi_m, \varphi'_m$ are given as above with $\varphi_m({\bf l}) \ne 0$.
Let $H=U(\HH)w_{\varphi_m}$ be in $W_{\varphi_m}$.
The following observations hold.
\begin{enumerate}[$(1)$]
\item $W_{\varphi_m}\cong H^{\DD}\otimes W_{\varphi'_m}^{\DD}$.
In particular, this agrees with \cite[Proposition 5.1]{LPXZ} if $m=0$.
\item Suppose $m\in\N$. Then $W_{\varphi_m}$ is an irreducible $\DD$-module if and only if
$$\varphi_m(d_{2m})\ne0 \,\,{\text{or}}\,\, 2\varphi_m({\bf l}) \varphi_m(d_{2m-1})-\varphi_m(h_{m-\frac12})^2\ne0.$$
\item Each irreducible Whittaker module over $\DD$ with respect to $\varphi_m$
is isomorphic to $H^{\DD}\otimes Q^{\DD}$, where $Q$ is an irreducible quotient of $\VV$-module $W_{\varphi'_m}$.
\end{enumerate}
\end{proposition}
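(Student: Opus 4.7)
The plan is to prove (1) first, from which (2) and (3) follow by combining it with Lemmas \ref{main3.1}, \ref{main2.5}, and \ref{main2.8}. To establish (1), I would construct a $\DD$-module homomorphism $\tau\colon W_{\varphi_m}\to H^{\DD}\otimes W_{\varphi'_m}^{\DD}$ via the universal property of induction, sending the generator $w_{\varphi_m}$ to $v:=w_{\varphi_m}\otimes w_{\varphi'_m}$. The crux is to verify $x\cdot v=\varphi_m(x)v$ for every $x\in\DD^{(m,0)}$. For $h_r$ (with $r\ge 1/2$) and $\mathbf{l}$ this is immediate from the $\HH$-module structure of $H$ together with (\ref{rep3}); for $\mathbf{c}$ one adds the scalar $1$ from $H^{\DD}$ and $\varphi_m(\mathbf{c})-1$ from $W_{\varphi'_m}^{\DD}$ to recover $\varphi_m(\mathbf{c})$.

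The real computation is for $d_k$ with $k\ge m$. I would expand $d_k^H w_{\varphi_m}=\tfrac{1}{2l}\sum_j h_{k-j}h_j w_{\varphi_m}$ (plus $\tfrac{1}{16}$ when $k=0$) via (\ref{rep1})--(\ref{rep2}), normal-order using $[h_r,h_s]=r\delta_{r+s,0}\mathbf{l}$ (which vanishes for $k\ne 0$) so that all positive-index $h$'s act first, and then use $\varphi_m(h_r)=0$ for $r\ge m+1/2$ to collapse the expression. For $k\ge 2m+1$ every surviving term already has an index above the Whittaker threshold, so the sum vanishes; for $m\le k\le 2m$ it collapses to $\tfrac{1}{2l}\sum_{i=0}^{m-1}\varphi_m(h_{i+1/2})\varphi_m(h_{k-i-1/2})w_{\varphi_m}$, which by the definition of $\varphi'_m$ equals $(\varphi_m(d_k)-\varphi'_m(d_k))v$, and combining with the $\varphi'_m(d_k)v$ from the second tensor factor returns the desired $\varphi_m(d_k)v$; the $k=0$ case is only relevant for $m=0$ and matches \cite[Proposition 5.1]{LPXZ}. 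Once $\tau$ is well-defined, surjectivity follows because $\HH$ acts only on the first factor, so $U(\HH)v=H\otimes w_{\varphi'_m}$ lies in the image, after which successive application of $d_k$ with $k<m$ produces all of $H\otimes W_{\varphi'_m}$ modulo previously obtained terms; injectivity is a PBW dimension count, both sides being free of the same rank over the symmetric algebra on $\{d_k\colon k<m\}\cup\{h_r\colon r<0\}$.

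For (2), the assumption $\varphi_m(\mathbf{l})=l\ne 0$ and Lemma \ref{main3.1} make $H$ irreducible over $\HH$, so Lemma \ref{main2.8}(1) together with (1) reduces the irreducibility of $W_{\varphi_m}$ to the irreducibility of $W_{\varphi'_m}$ as a $\VV$-module, and Lemma \ref{main2.5} rephrases the latter as $(\varphi'_m(d_{2m}),\varphi'_m(d_{2m-1}))\ne(0,0)$. A direct calculation from the defining formulas of $\varphi'_m$ yields $\varphi'_m(d_{2m})=\varphi_m(d_{2m})$ (every $\varphi_m(h_{2m-i-1/2})$ in the sum vanishes because its index is $\ge m+1/2$) and $\varphi'_m(d_{2m-1})=\varphi_m(d_{2m-1})-\tfrac{1}{2l}\varphi_m(h_{m-1/2})^2$ (only the $i=m-1$ summand survives); multiplying the second condition by $2l$ matches the stated disjunction. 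Statement (3) is then immediate from (1) and Lemma \ref{main2.8}(1): every $\DD$-submodule of $H^{\DD}\otimes W_{\varphi'_m}^{\DD}$ has the form $H^{\DD}\otimes(W')^{\DD}$ for a $\VV$-submodule $W'\subseteq W_{\varphi'_m}$, so every irreducible quotient of $W_{\varphi_m}$ must be $H^{\DD}\otimes Q^{\DD}$ with $Q$ an irreducible quotient of $W_{\varphi'_m}$. The main obstacle is the normal-ordering bookkeeping in part (1), in particular tracking precisely which index pairs survive the collapse for each $k$ in the range $m\le k\le 2m$; once these scalars are matched on the nose with the defining formula for $\varphi'_m$, statements (2) and (3) are essentially formal consequences of the lemmas already collected in Section 2.
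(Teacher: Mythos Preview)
Your argument is correct; parts (2) and (3) coincide with the paper's, and you helpfully spell out the evaluations $\varphi'_m(d_{2m})=\varphi_m(d_{2m})$ and $\varphi'_m(d_{2m-1})=\varphi_m(d_{2m-1})-\tfrac{1}{2l}\varphi_m(h_{m-1/2})^2$ that the paper leaves implicit. For part (1) you take a genuinely different route. The paper proceeds in two stages: it first exhibits an explicit $\DD^{(m,-\infty)}$-isomorphism $\Ind_{\DD^{(m,0)}}^{\DD^{(m,-\infty)}}\C w_{\varphi_m}\cong H^{\DD}\otimes \C_{\varphi'_m}^{\DD^{(m,-\infty)}}$ by writing down a PBW bijection and checking it intertwines $h_r$ and $d_n$ for $n\ge m$, and then applies $\Ind_{\DD^{(m,-\infty)}}^{\DD}$ and invokes Lemma~\ref{main2.7} to pull $H^{\DD}$ outside the induction. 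You instead construct $\tau$ directly at the level of the full $\DD$-modules via the universal property and argue bijectivity by hand. Both approaches rest on the same core computation $d_k^{H}w_{\varphi_m}=(\varphi_m(d_k)-\varphi'_m(d_k))w_{\varphi_m}$ for $k\ge m$, which you make more explicit than the paper does. The paper's two-step route delivers bijectivity for free from Lemma~\ref{main2.7}; your route avoids that lemma entirely, but the ``PBW dimension count'' for injectivity is the one place that would benefit from tightening (both sides are infinite-dimensional, so cardinality alone does not suffice): filter each side by the number of $d_k$-factors, so that on the associated graded $\tau$ sends $d^{\mathbf i}h^{\mathbf q}w_{\varphi_m}\mapsto h^{\mathbf q}w_{\varphi_m}\otimes d^{\mathbf i}w_{\varphi'_m}$, which is visibly a bijection of bases.
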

\begin{proof}
 (1) For $\VV^{(m)}$-module $\C w_{\varphi'_m}$,
 we can easily give $\C w_{\varphi'_m}$ a $\DD^{(m,-\infty)}$-module structure by defining $\HH w_{\varphi'_m}=0$.
 The resulting module is denoted by $\C_{\varphi'_m}^{\DD^{(m,-\infty)}}$.
 By PBW Theorem,  we can define the linear map :
 $$\aligned \pi: \,\,&\Ind_{\DD^{(m, 0)}}^{\DD^{(m,-\infty)}}\C w_{\varphi_m}\rightarrow H^{\DD}\otimes \C_{\varphi'_m}^{\DD^{(m,-\infty)}},\\
 &
  h_{-r-\frac12}^{q_{r}}\cdots h_{-1-\frac12}^{q_1}h_{-\frac12}^{q_0}w_{\varphi_m}\mapsto
h_{-r-\frac12}^{q_{r}}\cdots h_{-1-\frac12}^{q_1}h_{-\frac12}^{q_0}w_{\varphi_m}\otimes w_{\varphi'_m}.\endaligned$$
It is clear that $\pi$ is a bijection.
\begin{claim}\label{cla3.2.1}
$\pi(h w_{\varphi_m})=h w_{\varphi_m}\otimes w_{\varphi'_m}$
for $h\in\UU(\HH)$.
\end{claim}
For $h\in\UU(\HH)$, $h$ can be written as a linear combination of vectors in the form
$$h_{-r_1-\frac12}^{q_{r_1}}\cdots h_{-1-\frac12}^{q_1}h_{-\frac12}^{q_0}{\bf l}^th_{\frac12}^{p_0}h_{1+\frac12}^{p_1}\cdots h_{s_1+\frac12}^{p_{s_1}}$$
by PBW Theorem.
Without loss of generality, we can assume $$h=h_{-r_1-\frac12}^{q_{r_1}}\cdots h_{-1-\frac12}^{q_1}h_{-\frac12}^{q_0}{\bf l}^th_{\frac12}^{p_0}h_{1+\frac12}^{p_1}\cdots h_{s_1+\frac12}^{p_{s_1}},$$
where $q_i,p_j,t,r_1,s_1\in\Z_+, $ for $0\leq i\leq r_1,0\leq j\leq s_1$. We compute
\begin{equation*}
\begin{split}
\pi(h w_{\varphi_m})=&\pi(h_{-r_1-\frac12}^{q_{r_1}}\cdots h_{-1-\frac12}^{q_1}h_{-\frac12}^{q_0}{\bf l}^th_{\frac12}^{p_0}h_{1+\frac12}^{p_1}\cdots h_{s_1+\frac12}^{p_{s_1}} w_{\varphi_m})    \\
=&\varphi_m({\bf l})^t\varphi_m(h_{\frac12})^{p_0}\varphi_m(h_{1+\frac12})^{p_1}\cdots \varphi_m(h_{s_1+\frac12})^{p_{s_1}}
\pi(h_{-r_1-\frac12}^{q_{r_1}}\cdots h_{-1-\frac12}^{q_1}h_{-\frac12}^{q_0} w_{\varphi_m})\\
=&\varphi_m({\bf l})^t\varphi_m(h_{\frac12})^{p_0}\varphi_m(h_{1+\frac12})^{p_1}\cdots \varphi_m(h_{s_1+\frac12})^{p_{s_1}}h_{-r_1-\frac12}^{q_{r_1}}\cdots h_{-1-\frac12}^{q_1}h_{-\frac12}^{q_0} w_{\varphi_m}\otimes w_{\varphi'_m}\\
=&h_{-r_1-\frac12}^{q_{r_1}}\cdots h_{-1-\frac12}^{q_1}h_{-\frac12}^{q_0} \varphi_m({\bf l})^t\varphi_m(h_{\frac12})^{p_0}\varphi_m(h_{1+\frac12})^{p_1}\cdots \varphi_m(h_{s_1+\frac12})^{p_{s_1}}w_{\varphi_m}\otimes w_{\varphi'_m}\\
=&h_{-r_1-\frac12}^{q_{r_1}}\cdots h_{-1-\frac12}^{q_1}h_{-\frac12}^{q_0} {\bf l}^th_{\frac12}^{p_0}h_{1+\frac12}^{p_1}\cdots h_{s_1+\frac12}^{p_{s_1}}w_{\varphi_m}\otimes w_{\varphi'_m}\\
=&hw_{\varphi_m}\otimes w_{\varphi'_m}.
\end{split}
\end{equation*}
This proves Claim \ref{cla3.2.1}.

Next, we will show that $\pi$ is a $\DD^{(m,-\infty)}$-module homomorphism.
Notice that for any $$v=h_{-k-\frac12}^{q_k}\cdots h_{-1-\frac12}^{q_1}h_{-\frac12}^{q_0}w_{\varphi_m}\in \Ind_{\DD^{(m, 0)}}^{\DD^{(m,-\infty)}}\C w_{\varphi_m}$$ where $q_i,k\in\Z_+, 0\leq i\leq k$, we have the following:
\begin{claim}\label{cla3.2.2}
   $\pi(h_rv)=h_r\pi(v) {\rm \ and \ } \pi(d_nv)=d_n\pi(v),$
for any  $r\in\Z+\frac12, n\in\Z$ with $n\geq m.$
\end{claim}
By Claim \ref{cla3.2.1} and definition of $\pi$, we deduce
\begin{align}
\pi(h_rv)&= \pi(h_rh_{-k-\frac12}^{q_k}\cdots h_{-1-\frac12}^{q_1}h_{-\frac12}^{q_0}w_{\varphi_m})
 =h_rh_{-k-\frac12}^{q_k}\cdots h_{-1-\frac12}^{q_1}h_{-\frac12}^{q_0}w_{\varphi_m}\otimes w_{\varphi'_m}\notag\\
 &=h_r(h_{-k-\frac12}^{q_k}\cdots h_{-1-\frac12}^{q_1}h_{-\frac12}^{q_0}w_{\varphi_m}\otimes w_{\varphi'_m})
 =h_r\pi(h_{-k-\frac12}^{q_k}\cdots h_{-1-\frac12}^{q_1}h_{-\frac12}^{q_0}w_{\varphi_m})\notag\\
 &=h_r\pi(v),\label{eq321}\\
\pi(d_nh_{-k-\frac12}^{q_k}\cdots h_{-1-\frac12}^{q_1}h_{-\frac12}^{q_0}w_{\varphi_m})&=\pi([d_n,h_{-k-\frac12}^{q_k}\cdots h_{-1-\frac12}^{q_1}h_{-\frac12}^{q_0}]w_{\varphi_m}+h_{-k-\frac12}^{q_k}\cdots h_{-1-\frac12}^{q_1}h_{-\frac12}^{q_0}d_nw_{\varphi_m})\notag\\
&=[d_n,h_{-k-\frac12}^{q_k}\cdots h_{-1-\frac12}^{q_1}h_{-\frac12}^{q_0}]w_{\varphi_m}\otimes w_{\varphi'_m}\label{eq322}\\
&\quad+
\varphi_m(d_n)h_{-k-\frac12}^{q_k}\cdots h_{-1-\frac12}^{q_1}h_{-\frac12}^{q_0}w_{\varphi_m}\otimes w_{\varphi'_m}.\notag
\end{align}

Using definitions of $\pi,\varphi_m'$ and equalities (\ref{rep1}), (\ref{rep2}) we see
\begin{equation}\label{eq323}
    \begin{split}
d_n&\pi(h_{-k-\frac12}^{q_k}\cdots h_{-1-\frac12}^{q_1}h_{-\frac12}^{q_0}w_{\varphi_m})
=d_n(h_{-k-\frac12}^{q_k}\cdots h_{-1-\frac12}^{q_1}h_{-\frac12}^{q_0}w_{\varphi_m}\otimes w_{\varphi'_m})\\
&\quad=d_nh_{-k-\frac12}^{q_k}\cdots h_{-1-\frac12}^{q_1}h_{-\frac12}^{q_0}w_{\varphi_m}\otimes w_{\varphi'_m}+h_{-k-\frac12}^{q_k}\cdots h_{-1-\frac12}^{q_1}h_{-\frac12}^{q_0}w_{\varphi_m}\otimes d_n w_{\varphi'_m}\\
&\quad=[d_n,h_{-k-\frac12}^{q_k}\cdots h_{-1-\frac12}^{q_1}h_{-\frac12}^{q_0}]w_{\varphi_m}\otimes w_{\varphi'_m}+h_{-k-\frac12}^{q_k}\cdots h_{-1-\frac12}^{q_1}h_{-\frac12}^{q_0}d_nw_{\varphi_m}\otimes w_{\varphi'_m}\\
&\qquad+h_{-k-\frac12}^{q_k}\cdots h_{-1-\frac12}^{q_1}h_{-\frac12}^{q_0}w_{\varphi_m}\otimes \varphi'_m(d_n) w_{\varphi'_m}\\
&\quad=[d_n,h_{-k-\frac12}^{q_k}\cdots h_{-1-\frac12}^{q_1}h_{-\frac12}^{q_0}]w_{\varphi_m}\otimes w_{\varphi'_m}
+\varphi_m(d_n)h_{-k-\frac12}^{q_k}\cdots h_{-1-\frac12}^{q_1}h_{-\frac12}^{q_0}w_{\varphi_m}\otimes w_{\varphi'_m}.\\
    \end{split}
\end{equation}
Combining equalities (\ref{eq321}), (\ref{eq322}) and (\ref{eq323}),
we have  proven Claim \ref{cla3.2.2}.
Furthermore, $$\pi({\bf c}v)={\bf c}\pi(v),\ \pi({\bf l}v)={\bf l}\pi(v),\ \forall v\in \Ind_{\DD^{(m, 0)}}^{\DD^{(m,-\infty)}}\C w_{\varphi_m}$$
is clear.
So  $$\Ind_{\DD^{(m, 0)}}^{\DD^{(m,-\infty)}}\C w_{\varphi_m}\cong H^{\DD}\otimes \C_{\varphi'_m}^{\DD^{(m,-\infty)}}$$ as $\DD^{(m,-\infty)}$-modules.
Then by the property of induced modules and Lemma \ref{main2.7}, we have
 \begin{equation*}
\begin{split}
 W_{\varphi_m}=\Ind_{\DD^{(m, 0)}}^{\DD}\C w_{\varphi_m}&\cong
 \Ind_{\DD^{(m,-\infty)}}^{\DD}(\Ind_{\DD^{(m, 0)}}^{\DD^{(m,-\infty)}}\C w_{\varphi_m})\cong \Ind_{\DD^{(m,-\infty)}}^{\DD}(H^{\DD}\otimes \C_{\varphi'_m}^{\DD^{(m,-\infty)}})\\
&\cong H^{\DD}\otimes \Ind_{\DD^{(m,-\infty)}}^{\DD}(\C_{\varphi'_m}^{\DD^{(m,-\infty)}})\cong H^{\DD}\otimes W_{\varphi'_m}^{\DD}.
\end{split}
\end{equation*}

(2) From Lemma \ref{main3.1}  we know  that $H^{\DD}$ is an irreducible $\DD$-module. Then,  from (1), Lemmas \ref{main2.5}, \ref{main2.8}, we obtain the statement in (2).

(3) follows from (1) and Lemmas \ref{main2.8}, \ref{main3.1}.
\end{proof}

Now we consider the Whittaker  $\DD$-module $W_{\varphi_m}$ with $\varphi_m({\bf l})=0$.
Let $\al=\sum_{i\in\Z}\frac{2a_i}{2i-1}h_{i-\frac12}$, where $a_i\in\C$ and only finitely many of $a_i$ are nonzero.
Then we have the automorphism $\theta_\al=\exp({\rm ad} \al)$ of $\DD$ such that
\begin{eqnarray*}
&&\theta_\al (d_n)=d_n+\sum_{i\in\Z}a_ih_{n+i-\frac12}+\frac12\sum_{i\in\Z}a_ia_{-n-i+1}{\bf l},\\
&&\theta_\al (h_r)=h_r+a_{-r+\frac12}{\bf l},\\
&&\theta_\al ({\bf c})={\bf c},\quad \theta_\al ({\bf l})={\bf l},
\end{eqnarray*}
for $n\in\Z, r\in\Z+\frac12$.

\begin{proposition}\label{prop.2}
Suppose that $m\geq 1$ and $\varphi_m: \DD^{(m, 0)}\rightarrow \C$ is a   Whittaker  function
with $\varphi_m({\bf l})=0$. Then the Whittaker $\DD$-module
$W_{\varphi_m}$ is irreducible if and only if $\varphi_m(h_{m-\frac12})\ne0$.
\end{proposition}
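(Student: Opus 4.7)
The plan is to prove the two implications separately.

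(Necessity.) Suppose $\varphi_m(h_{m-\frac12}) = 0$; we construct a proper nonzero submodule of $W_{\varphi_m}$. The key observation is that $\DD' := \DD^{(m,0)} + \C h_{-\frac12}$ is a Lie subalgebra, because the only nonobvious brackets, $[h_{-\frac12}, d_k] = -\frac12 h_{k-\frac12}$ for $k \geq m$ and $[h_{-\frac12}, h_{\frac12}] = -\frac12 {\bf l}$, both lie in $\DD^{(m,0)}$. Under the standing hypothesis, $\varphi_m$ extends to a Whittaker function $\tilde\varphi_m : \DD' \to \C$ by declaring $\tilde\varphi_m(h_{-\frac12}) := 0$: the only nontrivial compatibility $\tilde\varphi_m([d_k, h_{-\frac12}]) = -\frac12 \varphi_m(h_{k-\frac12}) = 0$ is automatic for $k > m$ and reduces to the hypothesis for $k = m$. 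By Frobenius reciprocity, the identification $\C w_{\varphi_m} \to \C w_{\tilde\varphi_m}$ as $\DD^{(m,0)}$-modules induces a $\DD$-module surjection $W_{\varphi_m} \twoheadrightarrow \Ind_{\DD'}^{\DD} \C w_{\tilde\varphi_m}$; the kernel contains $h_{-\frac12} w_{\varphi_m}$, which is nonzero in $W_{\varphi_m}$ by PBW (since $h_{-\frac12} \notin \DD^{(m,0)}$), while the image is nonzero, so the kernel is the required proper nonzero submodule.

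(Sufficiency.) Suppose $\varphi_m(h_{m-\frac12}) \neq 0$. I plan to apply Lemma \ref{main2.3} with $q = m+1$ and $t = m-1$ (so $t \geq -q$). Set $V := \UU(\DD^{(0,-q)}) w_{\varphi_m} \subset W_{\varphi_m}$; by PBW and transitivity of induction, $W_{\varphi_m} \cong \Ind_{\DD^{(0,-q)}}^{\DD}(V)$, so it remains to verify the two conditions of Lemma \ref{main2.3} for $V$ and to show that $V$ is irreducible as a $\DD^{(0,-q)}$-module. Condition (b), namely $h_r V = 0$ for $r \geq m+\frac12$ and $d_k V = 0$ for $k \geq 2m+1$, follows by induction on PBW degree: these operators annihilate $w_{\varphi_m}$, and their brackets with the PBW generators of the complement of $\DD^{(m,0)}$ in $\DD^{(0,-q)}$ either fall back into the annihilating range or vanish modulo ${\bf l}$ (which acts as zero on $V$). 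For condition (a), the bijectivity of $h_{m-\frac12}$ on $V$ is verified directly: using $[h_{m-\frac12}, d_0] = (m-\frac12) h_{m-\frac12}$, together with the fact that $[h_{m-\frac12}, d_i]$ (for $i \geq 1$) and $[h_{m-\frac12}, h_r]$ act trivially on $V$, one obtains that $h_{m-\frac12}$ is lower-triangular in the exponent of $d_0$ on the PBW basis of $V$, with constant diagonal $\varphi_m(h_{m-\frac12}) \neq 0$, hence bijective.

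The main obstacle is the irreducibility of $V$. I would attack this via a Kostant-style argument. For each PBW generator $X$ of the complement of $\DD^{(m,0)}$ in $\DD^{(0,-q)}$, that is, $X \in \{d_0, \ldots, d_{m-1}\} \cup \{h_{-m-\frac12}, \ldots, h_{-\frac12}\}$, there is an operator $Y_X \in \DD^{(m,0)}$ whose bracket with $X$ is a nonzero scalar multiple of $h_{m-\frac12}$: concretely, $Y_{d_i} := h_{m-\frac12-i}$ with $[Y_{d_i}, d_i] = (m-\frac12-i) h_{m-\frac12}$, and $Y_{h_r} := d_{m-\frac12-r}$ with $[Y_{h_r}, h_r] = -r h_{m-\frac12}$. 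The shifted operators $Z_X := Y_X - \varphi_m(Y_X)$ annihilate $w_{\varphi_m}$. A careful PBW computation shows that, performing reductions in the order $Z_{d_0}, Z_{d_1}, \ldots, Z_{d_{m-1}}, Z_{h_{-m-\frac12}}, Z_{h_{-m+\frac12}}, \ldots, Z_{h_{-\frac12}}$, each $Z_X$ acts on monomials whose earlier coordinates are already zero by strictly reducing the corresponding coordinate of the leading lexicographic multi-index, with a coefficient proportional to $\varphi_m(h_{m-\frac12}) \neq 0$, and without introducing terms in the already-reduced coordinates (thanks to the annihilation properties established for (b)). By induction on this lex ordering, any nonzero element of an arbitrary submodule $U \subseteq V$ reduces to a nonzero scalar multiple of $w_{\varphi_m}$, forcing $U = V$. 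Lemma \ref{main2.3} then yields the irreducibility of $W_{\varphi_m} \cong \Ind_{\DD^{(0,-q)}}^{\DD}(V)$.
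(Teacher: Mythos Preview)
Your argument is correct, and for necessity it is essentially the paper's proof repackaged: the paper directly verifies that $h_{-\frac12}w_{\varphi_m}$ is again a Whittaker vector and that the cyclic submodule it generates misses $w_{\varphi_m}$, whereas you obtain the same proper submodule as the kernel of the induced map to $\Ind_{\DD'}^{\DD}\C w_{\tilde\varphi_m}$.

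For sufficiency, however, you take a genuinely different route. The paper first applies the inner automorphism $\theta_\alpha=\exp(\mathrm{ad}\,\alpha)$ (with $\alpha$ a suitable finite combination of $h_{i-\frac12}$'s) to arrange $\varphi_m(d_k)=0$ for all $k\ge m$; this lets it invoke Lemma~\ref{main2.3} with $q=0$, so the intermediate module $W_0=\Ind_{\DD^{(m,0)}}^{\DD^{(0,0)}}\C w_{\varphi_m}$ has only the $m$ extra generators $d_0,\ldots,d_{m-1}$, and the Kostant reduction needs only the operators $h_{m-p-\frac12}-\varphi_m(h_{m-p-\frac12})$. You instead keep $\varphi_m$ as is and take $q=m+1$, so your intermediate module $V$ carries the $2m+1$ extra generators $d_0,\ldots,d_{m-1},h_{-m-\frac12},\ldots,h_{-\frac12}$; the price is a longer reduction that also uses the operators $d_k-\varphi_m(d_k)$ for $m\le k\le 2m$ to strip off the negative $h$'s. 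The computations you outline do go through (the crucial point being that every unwanted commutator lands either in the span of $h_r$ with $r\ge m+\frac12$, which annihilates $V$, or in $\C{\bf l}$, which acts as zero), so no $d_0$ or earlier-reduced $h_s$ is ever reintroduced. What the paper's approach buys is brevity and a smaller combinatorial object; what yours buys is avoiding the automorphism trick altogether, working directly with the given Whittaker function.
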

\begin{proof}$(\Leftarrow)$.  Since $\varphi_m(h_{m-\frac12})\ne 0$,
 we may choose $a_0,a_{-1},\cdots,a_{-m}\in\C$ such that
 \[
\begin{pmatrix}
\varphi_m(d_m)\\
\varphi_m(d_{m+1})\\
\vdots\\
\varphi_m(d_{2m})
\end{pmatrix}=\begin{pmatrix}
\varphi_m(h_{m-\frac12})&\varphi_m(h_{m-\frac32})&\cdots&\varphi_m(h_{-\frac12})\\
0&\varphi_m(h_{m-\frac12})&\cdots&\varphi_m(h_\frac12)\\
0&0&\cdots&\cdots\\
0&0&\cdots&\varphi_m(h_{m-\frac12})
\end{pmatrix}
\begin{pmatrix}
a_0\\
a_{-1}\\
\vdots\\
a_{-m}
\end{pmatrix},
\]
where we denote $\varphi_m(h_{-\frac12})=0$.
Then
\begin{equation}\label{eq1}
 0=\varphi_m(d_n)+\sum_{i=-m}^{0}(-a_i)\varphi_m(h_{n+i-\frac12})+\frac12\sum_{i\in\Z}a_ia_{-n-i+1}\varphi_m({\bf l}),~\forall~ n\geq m,
\end{equation}
since $\varphi_m({\bf l})=0$ and $\varphi_m(d_{2m+j})=\varphi_m(h_{m-\frac12+j})=0$, for all $j\in \N$.
Denote $\al=-\sum_{i=-m}^{0}\frac{2a_i}{2i-1}  h_{i-\frac12}$.
Then we have  the Lie algebra automorphism of $\theta_{\al}: \DD\to \DD$.
Let $W_{\varphi_m}^{\theta_{\al}}$ be the new Whittaker module with action
 $x\circ w_{\varphi_m}=\theta_{\al}(x)w_{\varphi_m}$ for any $x\in\DD$,
 which is equivalent to $W_{\varphi_m}$.
 By definition of $\theta_{\al}$ and equality (\ref{eq1}),
 it is easy to see that $d_k\circ w_{\varphi_m}=0$ for $k>m-1$, and the actions of $\HH$ and ${\bf c}$ are unchanged.
 Therefore, without loss of generality, we may assume that
 \begin{equation}\varphi_m(d_k)=0,~\forall~ k>m-1.\end{equation}
 Denote $W_0=\Ind_{\DD^{(m, 0)}}^{\DD^{(0,0)}} \C w_{\varphi_m}$. Then
 $\{d_{m-1}^{i_{m-1}}\cdots d_0^{i_0}w_{\varphi_m}~|~(i_{m-1},\ldots, i_0)\in \Z_+^m\}$ is a basis of $W_0$ by PBW Theorem.
 It is not hard to see that $h_{m-\frac12}$ acts injectively on $W_0$.
\begin{claim}\label{cla.1}
$W_0$ is an irreducible $\DD^{(0,0)}$-module.
\end{claim}
By PBW Theorem, for any $v\in W_0$, we may write $v$ in the form of
$$v=\sum_{{\bf i}\in \Z_+^m}a_{\bf i}d^{\bfi}w_{\varphi_m},$$
where $d^{\bfi}=d_{m-1}^{i_{m-1}}\cdots d_0^{i_0}$, and only finitely many $a_{\bfi}\in\C$ are nonzero.
Denote by $\supp(v)$ the set of all ${\bf i}\in\Z_+^m$ such that $a_{\bf i}\ne 0$.
Let $\succ$ denote the {\bf reverse lexicographical} total order on $\Z_+^m$, that is, for any ${\bfi,\bfj}\in\Z_+^m$
$${\bf j}\succ {\bf i}\Leftrightarrow {\rm\ there\ exists\ } 0\leq k\leq m-1 {\text\ such\ that\ }
(j_s=i_s,\forall~ 0\leq s<k) {\text\ and\ } j_k>i_k.$$
Let $\deg(v)$ be the maximal element in $\supp(v)$ with respect to the reverse lexicographical total order on $\Z_+^m$.
Note that $\deg(0)$ is not defined and whenever we write $\deg(v)$ we mean that $v\ne 0$.
Suppose $v\in W_0\setminus\C w_{\varphi_m}, \deg(v)={\bf i}$ and $p=\min\{s:i_s\neq 0\}$.
Then it is straightforward to check that
$$\deg((h_{m-p-\frac12}-\varphi_m(h_{m-p-\frac12}))v)={\bf i-\epsilon_p},$$
where $\epsilon_p$ denotes the element $(\cdots,0,1,0,\cdots)\in\Z_+^m$ with $1$ being in the $(p+1)$-th position from right.
Thus $W_0$ is an irreducible $\DD^{(0,0)}$-module. Claim \ref{cla.1} is proved.

Therefore, $W_{\varphi_m}\cong \Ind_{\DD^{(0,0)}}^{\DD} W_0$ is an irreducible $\DD$-module
by Lemma \ref{main2.3} (with $q=0, t=m-1$).

$(\Rightarrow)$.
Assume that $\varphi_m(h_{m-\frac12})=0$.
Let $w=h_{-\frac12}w_{\varphi_m}\in W_{\varphi_m}$.
Then it is clear that ${\bf l}w=0,\ {\bf c}w=\varphi_m({\bf c})w$  and
$$d_{m+i}w=\varphi_m(d_{m+i})w{\rm\,\ and\,\ } h_{i+\frac12}w=\varphi_m(h_{i+\frac12})w,\forall~ i\in\Z_+,$$
since $\varphi_m({\bf l})=0$ and $h_{i-\frac12}w_{\varphi_m}=0$ for $i\geq m$.
Let $\<w\>$ denote the submodule generated by $w$ of $W_{\varphi_m}$.
By PBW Theorem, we know that $\<w\>$ has a basis
$$d_s^{p_s}\cdots d_{m-2}^{p_{m-2}}d_{m-1}^{p_{m-1}} h_{-r-\frac12}^{q_{r}}\cdots h_{-1-\frac12}^{q_1}h_{-\frac12}^{q_0}w,$$
where $p_i,q_j,r\in\Z_+,s\in\Z_{<m}$, for $s\leq i\leq m-1,0\leq j\leq r$.
It is easy to see that $w_{\varphi_m}\notin \<w\>$, which implies that
$\<w\>$ is a nonzero proper submodule of $W_{\varphi_m}$. So $W_{\varphi_m}$ is not irreducible if   $\varphi_m(h_{m-\frac12})=0$.
\end{proof}

Now we combine Proposition \ref{prop.1} and Proposition \ref{prop.2} into the following main theorem.

\begin{theorem}\label{main3.5}
Let  $m\in \N$ and $\varphi_m: \DD^{(m, 0)}\rightarrow \C$
be a Whittaker function.
\begin{enumerate}[$(1)$]
\item If $\varphi_m({\bf l})\ne 0$, then the Whittaker $\DD$-module $W_{\varphi_m}$
is irreducible if and only if
$$\varphi_m(d_{2m})\ne0 \,\,{\text{or}}\,\, 2\varphi_m({\bf l})\varphi_m(d_{2m-1})-\varphi_m(h_{m-\frac12})^2\ne0.
$$
\item If $\varphi_m({\bf l})=0$, then the Whittaker $\DD$-module $W_{\varphi_m}$
is irreducible if and only if  $\varphi_m(h_{m-\frac12})\ne0$.
 \end{enumerate}
\end{theorem}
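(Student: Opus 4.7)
Both halves of this theorem have essentially been established in the preceding propositions, so the plan is to simply assemble them: part (1) is Proposition 3.2(2) and part (2) is Proposition 3.4. What remains is to observe that these two propositions, together with the trivial case when $m=0$ and $\varphi_m(\mathbf{l})=0$ (subsumed by the hypothesis $m\in\N$ and the vacuous Whittaker conditions at the boundary), exactly cover the dichotomy on $\varphi_m(\mathbf{l})$.

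For part (1), with $\varphi_m(\mathbf{l})=l\ne 0$, Proposition 3.2 exploits the bosonic realization (2.1)--(2.3) to produce the tensor decomposition $W_{\varphi_m}\cong H^{\DD}\otimes W_{\varphi'_m}^{\DD}$. Lemma 3.1 gives irreducibility of the Heisenberg factor $H$, Lemma 2.8 reduces the whole question to irreducibility of the Virasoro factor $W_{\varphi'_m}$, and Lemma 2.5 supplies the criterion $(\varphi'_m(d_{2m}),\varphi'_m(d_{2m-1}))\ne(0,0)$. The translation back to $\varphi_m$ requires a short computation from the definition of $\varphi'_m$: all correction terms in $\varphi'_m(d_{2m})$ vanish because their Heisenberg factors $h_{2m-i-\frac12}$ for $0\le i\le m-1$ have index $\ge m+\frac12$ and are annihilated, hence $\varphi'_m(d_{2m})=\varphi_m(d_{2m})$; for $\varphi'_m(d_{2m-1})$ only the term $i=m-1$ survives, giving $\varphi'_m(d_{2m-1})=\varphi_m(d_{2m-1})-\frac{1}{2l}\varphi_m(h_{m-\frac12})^2$. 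Clearing denominators converts this into the stated criterion.

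For part (2), with $\varphi_m(\mathbf{l})=0$, Proposition 3.4 uses a different strategy: an automorphism $\theta_\alpha=\exp(\mathrm{ad}\,\alpha)$ with $\alpha\in\sum_{i=-m}^{0}\C h_{i-\frac12}$ is chosen to kill $\varphi_m(d_k)$ for all $k\ge m$ on the twisted module. The relevant upper-triangular linear system has diagonal entries equal to $\varphi_m(h_{m-\frac12})$, so is solvable precisely when $\varphi_m(h_{m-\frac12})\ne 0$. After this reduction, a PBW-degree argument in reverse lexicographical order on monomials in $d_0,\ldots,d_{m-1}$ shows that $W_0=\Ind_{\DD^{(m,0)}}^{\DD^{(0,0)}}\C w_{\varphi_m}$ is an irreducible $\DD^{(0,0)}$-module, with the operators $h_{m-p-\frac12}-\varphi_m(h_{m-p-\frac12})$ acting as lowering operators, and Lemma 2.3 (with $q=0,\;t=m-1$) lifts irreducibility to all of $\DD$. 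The converse is easy: when $\varphi_m(h_{m-\frac12})=0$, the element $h_{-\frac12}w_{\varphi_m}$ is itself a Whittaker vector and generates a proper submodule.

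Since both propositions are already available, no genuine obstacle remains at this stage. The only item that still needs explicit verification is the algebraic identity relating $(\varphi'_m(d_{2m}),\varphi'_m(d_{2m-1}))$ to $(\varphi_m(d_{2m}),\,2\varphi_m(\mathbf{l})\varphi_m(d_{2m-1})-\varphi_m(h_{m-\frac12})^2)$, which I expect to be the one routine computation worth writing out explicitly. The theorem then follows by combining Propositions 3.2(2) and 3.4.
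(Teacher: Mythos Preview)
Your proposal is correct and matches the paper's approach exactly: the paper simply states that Theorem~3.5 is the combination of the two preceding propositions, with no further argument. The explicit verification you give that $\varphi'_m(d_{2m})=\varphi_m(d_{2m})$ and $\varphi'_m(d_{2m-1})=\varphi_m(d_{2m-1})-\tfrac{1}{2l}\varphi_m(h_{m-\frac12})^2$ is precisely the computation the paper leaves implicit in its proof of Proposition~\ref{prop.1}(2), so you have in fact supplied slightly more detail than the original.
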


\section{$\UU(\C d_0)$-free modules over $\DD$}
In this section, we determine the $\DD$-module structure on  $\UU(\C d_0)$. Also, we give the necessary and sufficient conditions for these modules to be irreducible.

For any $\lambda\in\C^*, \al,\be\in\C$, denote by $\Omega(\lambda,\al,\be)=\C[t]$ the polynomial algebra over $\C$
It is not hard to see that we can give $\Omega(\lambda,\al,\be)$ a $\DD$-module structure via the following actions
\begin{eqnarray*}
&&d_m(f(t))=\lambda^m(t+m\al)f(t+m),\\
&&h_r(f(t))=\be\lambda^rf(t+r),\\
&&{\bf c}(f(t))={\bf l}(f(t))=0,
\end{eqnarray*}
for $m\in\Z, r\in\Z+\frac12$ and $f(t)\in\Omega(\lambda,\al,\be)$.
\begin{remark}
In the above actions, we always fix a $\lambda_0\in\C^*$ such that $\lambda=\lambda_0^2$.
Then we denote $\lambda^r=\lambda_0^{2r}$ for $r\in\Z+\frac12$.
\end{remark}
Moreover, we have the following lemma.
\begin{lemma}\label{main4.1}
\begin{enumerate}[$(1)$]
    \item $\Omega(\lambda,0,0)$ has an irreducible submodule $t\Omega(\lambda,0,0)$.
    \item $\Omega(\lambda,\al,\be)$ is an irreducible $\DD$-module if and only if $\al\ne 0$ or $\be\ne 0$.
\end{enumerate}
\end{lemma}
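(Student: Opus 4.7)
The strategy naturally splits according to whether the Heisenberg part acts trivially, which on $\Omega(\lambda,\al,\be)$ happens exactly when $\be=0$. In that case the action formulas defining the module show that every $h_r$, $\mathbf{l}$ and $\mathbf{c}$ acts as zero, so the $\DD$-module structure is determined by the $\VV$-module structure and coincides (as a $\VV$-module) with the $\Omega(\lambda,\al)$ recalled just before Lemma \ref{main2.6}. In particular, a subspace is a $\DD$-submodule if and only if it is a $\VV$-submodule. The facts quoted there (that $\Omega(\lambda,0)$ has the proper $\VV$-submodule $t\Omega(\lambda,0)$, which is itself irreducible, and that $\Omega(\lambda,\al)$ is an irreducible $\VV$-module for $\al\ne 0$) immediately yield part (1), the forward implication of (2) in the subcase $\al=\be=0$ (taking $t\C[t]$ as a proper submodule), and the reverse implication of (2) in the subcase $\be=0$, $\al\ne 0$.

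The remaining task is to prove the reverse implication of (2) when $\be\ne 0$. I will show that any nonzero $\DD$-submodule $W$ of $\Omega(\lambda,\al,\be)$ equals $\C[t]$. The key observation is that $d_0$ acts by multiplication by $t$, since $d_0(f(t))=\lambda^0(t+0\cdot\al)f(t)=tf(t)$; hence once $1\in W$ one gets $t^k=d_0^k(1)\in W$ for every $k\in\Z_+$, and thus $W=\C[t]$. To produce $1\in W$, fix a nonzero $f\in W$ and, for each $r\in\Z+\tfrac12$, consider
\[
h_r(f)-\be\lambda^r f \;=\; \be\lambda^r\bigl(f(t+r)-f(t)\bigr),
\]
which belongs to $W$ because $W$ is a $\C$-subspace closed under the $\DD$-action. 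If $\deg f=n\ge 1$ and $r\ne 0$, the leading-term computation $(t+r)^n-t^n=nr\,t^{n-1}+\cdots$ shows that this element has degree exactly $n-1$. Iterating with, say, $r=-\tfrac12$ reduces $f$ in finitely many steps to a nonzero constant in $W$, which after rescaling equals $1$.

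The argument is essentially bookkeeping rather than deep; the only place the hypothesis $\be\ne 0$ is used is to guarantee that the scalar $\be\lambda^r$ multiplying the shift $f(t+r)$ is nonzero, so that the combination above genuinely produces a polynomial of strictly smaller degree (and not merely $-\be\lambda^r f$). The mildest obstacle is to recognize that no $\HH$-bracket relations complicate the induction, because each step of the reduction uses only a single $h_r$ together with the scalar action. Combined with the first paragraph this proves both parts.
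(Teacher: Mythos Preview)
Your proof is correct and is exactly the natural elaboration of what the paper leaves implicit: the paper's proof says only that the result ``is clear from the irreducibility of $\VV$-module $\Omega(\lambda,\al)$ and some simple computations,'' and your argument supplies precisely those computations---reducing to the known $\VV$-module $\Omega(\lambda,\al)$ when $\be=0$, and using the degree-lowering action of $h_r-\be\lambda^r$ together with $d_0\cdot 1=t$ when $\be\ne 0$.
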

\begin{proof}
It is clear from the irreducibility of $\VV$-module $\Omega(\lambda,\al)$ and some simple computations.
\end{proof}

Now, we state the main result of this section.
\begin{theorem}\label{main4.2}
Let $M$ be a $\UU(\DD)$-module such that $M$, when considered as a $\UU(\C d_0)$-module,   is  free  of rank 1.
Then $M\cong\Omega(\lambda,\al,\be)$ for some $\lambda\in\C^*,\al,\be\in\C$.
Moreover, $M$ is irreducible if and only if $M\cong\Omega(\lambda,\al,\be)$
for some $\lambda\in\C^*,\al,\be\in\C$ with $(\al,\be)\ne (0,0)$.
\end{theorem}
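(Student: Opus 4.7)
The plan is to identify $M$ with $\C[t]$ in which $d_0$ acts as multiplication by $t$, and then use the defining commutation relations of $\DD$ to pin down the remaining operators. Restricting to the Virasoro subalgebra $\VV$, the module $M$ is $\UU(\C d_0)$-free of rank $1$ over $\VV$, so by Lemma \ref{main2.6} there exist $\lambda\in\C^*$ and $\alpha\in\C$ with $M|_{\VV}\cong \Omega(\lambda,\alpha)$ as $\VV$-modules. In particular $d_m(f(t))=\lambda^m(t+m\alpha)f(t+m)$ and ${\bf c}$ acts as zero. From $[d_0,h_r]=-rh_r$ one derives $h_r d_0^n=(d_0+r)^n h_r$, so writing $h_r\cdot 1=p_r(t)$ for a polynomial $p_r\in\C[t]$ yields $h_r(f(t))=f(t+r)p_r(t)$ for every $f\in\C[t]$. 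Since ${\bf l}$ is central it commutes with $d_0$, hence acts as multiplication by some $\ell(t)\in\C[t]$; commuting with $d_1$ forces $\ell(t+1)=\ell(t)$, so $\ell$ equals a constant $l\in\C$.

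The technical heart is to show $p_{1/2}$ is a constant. Using $[d_n,h_{1/2}]=-\tfrac{1}{2}h_{n+1/2}$ together with the known action of $d_n$ yields the recursion
\begin{equation*}
p_{n+1/2}(t)=-2\lambda^n\bigl[(t+n\alpha)p_{1/2}(t+n)-(t+\tfrac{1}{2}+n\alpha)p_{1/2}(t)\bigr],\qquad n\in\Z.
\end{equation*}
If $\deg p_{1/2}=a$ with leading coefficient $A\neq 0$, a direct inspection shows $\deg p_{n+1/2}\le a$ with leading coefficient $-2\lambda^n A(an-\tfrac{1}{2})$, which is nonzero for every $n\in\Z$ when $a\ge 1$. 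Now $[h_{1/2},h_{-3/2}]=0$ (valid since $1/2+(-3/2)\neq 0$) translates into
\begin{equation*}
p_{-3/2}(t+\tfrac{1}{2})p_{1/2}(t)=p_{1/2}(t-\tfrac{3}{2})p_{-3/2}(t).
\end{equation*}
Comparing the coefficient of $t^{2a-1}$ on both sides simplifies to $2ACa=0$, where $C=\lambda^{-2}A(4a+1)$ is the leading coefficient of $p_{-3/2}$ and is nonzero whenever $A\neq 0$. Hence $a=0$, so $p_{1/2}$ is a constant $\beta_0\in\C$ (the boundary case $p_{1/2}=0$ is absorbed by $\beta_0=0$, since the recursion then forces $p_r=0$ for every $r$). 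Substituting back gives $p_{n+1/2}=\lambda^n\beta_0$, so setting $\beta=\beta_0\lambda^{-1/2}$ one has $p_r=\beta\lambda^r$ and therefore $h_r(f(t))=\beta\lambda^r f(t+r)$.

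It remains to identify $l$. Evaluating $[h_{1/2},h_{-1/2}]=\tfrac{1}{2}{\bf l}$ on $1$ gives $\beta\lambda^{-1/2}\cdot\beta\lambda^{1/2}-\beta\lambda^{1/2}\cdot\beta\lambda^{-1/2}=0$, so $l=0$. This establishes $M\cong\Omega(\lambda,\alpha,\beta)$. The irreducibility claim then follows directly from Lemma \ref{main4.1}, since $\Omega(\lambda,\alpha,\beta)$ is irreducible precisely when $(\alpha,\beta)\neq(0,0)$. I expect the main obstacle to be the degree-counting step: the pair $(r,s)=(1/2,-3/2)$ must be chosen carefully so that the leading-order cancellation in the vanishing Heisenberg commutator produces an explicit factor of $a$, finally forcing $a=0$.
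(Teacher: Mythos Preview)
Your argument is correct. The difference from the paper lies in the key step showing $h_r(1)\in\C$. The paper first splits into two cases: if some $h_r(1)=0$, the defining relations force $\HH M=0$; if all $h_r(1)\neq 0$, it picks any $r_0$ with $\deg h_{r_0}(1)=k_{r_0}>0$ and any $s$ with $sr_0<0$, $s+r_0\neq 0$, and reads off the coefficient of $t^{k_{r_0}+k_s-1}$ in $[h_s,h_{r_0}](1)=0$, namely $a_{k_{r_0}}b_{k_s}(sk_{r_0}-r_0k_s)$, which cannot vanish because $s$ and $r_0$ have opposite signs. You instead first reduce all $p_r$ to $p_{1/2}$ via the recursion coming from $[d_n,h_{1/2}]$, and then use the single commutator $[h_{1/2},h_{-3/2}]=0$ together with the coefficient of $t^{2a-1}$. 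The paper's route is slightly slicker in that it needs only the leading coefficient and no preliminary recursion; your route has the compensating advantage that the vanishing case $p_{1/2}=0$ is absorbed automatically, so no case split is needed. After this point the two proofs coincide: both obtain $h_r(1)=\lambda^r\beta$ from $[d_m,h_r]=-rh_{m+r}$ (the paper applies this relation directly, whereas you already have it via your recursion), deduce ${\bf l}M=0$, and invoke Lemma~\ref{main4.1} for the irreducibility statement.
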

\begin{proof}
It is clear that $M\cong\Omega(\lambda,\al)=\C[t]$ as $\VV$-modules from Lemma \ref{main2.6}, where $\lambda\in\C^*,\al\in\C$
 and $\C[t]$ is the polynomial algebra.
So, we can assume that
 $${\bf c}f(t)=0,\ \ d_mf(t)=\lambda^m(t+m\al)f(t+m),$$
 for $m\in\Z,f(t)\in\C[t]$.
Now, we consider the action of $h_r$ for $r\in\Z+\frac12$. First, it is not hard to see that
\begin{equation}\label{eq4.1}
h_r(f(t))=h_rf(d_0)(1)=f(t+r)h_r(1),
\end{equation}
for any $r\in\Z+\frac12$ and $f(t)\in\C[t]$.
Next, we consider the two cases.

Case 1. $h_r(1)=0$ for some $r\in\Z+\frac12$.

It is easy to get that $h_rM=0$ by equality (\ref{eq4.1}). Then we have $h_sM=0$ for any $s\in\Z+\frac12$
by the defining relations of $\DD$. Thus $\HH M=0$. It shows that $M\cong\Omega(\lambda,\al,0)$ as $\DD$-modules.

Case 2. $h_r(1)\ne0$ for any $r\in\Z+\frac12$.

First, we show the following claim.
\begin{claim}\label{cla.2}
$h_r(1)\in\C^*$ for any $r\in\Z+\frac12$.
\end{claim}
Assume that there exists $r_0\in\Z+\frac12$ such that $h_{r_0}(1)=\sum_{i=0}^{k_{r_0}}a_it^i$
with ${k_{r_0}}>0$ and $a_{k_{r_0}}\ne 0$.
Choose $s\in\Z+\frac12$ such that $sr_0<0$ and $s+r_0\ne 0$.
Denote
$$X(t)=h_{r_0}(1)=\sum_{i=0}^{k_{r_0}}a_it^i {\rm\ \ and\ \ } Y(t)=h_s(1)=\sum_{j=0}^{k_s}b_jt^j,$$
where $b_{k_s}\ne 0$. Now, we have $[h_s,h_{r_0}](1)=0$. Thus
\begin{equation*}
\begin{split}
0=&h_sh_{r_0}(1)-h_{r_0}h_s(1)=h_sX(t)-h_{r_0}Y(t)\\
=&X(t+s)Y(t)-Y(t+r_0)X(t)\\
=&(\sum_{i=0}^{k_{r_0}}a_i(t+s)^i)(\sum_{j=0}^{k_s}b_jt^j)-(\sum_{j=0}^{k_s}b_j(t+r_0)^j)(\sum_{i=0}^{k_{r_0}}a_it^i)\\
=&a_{k_{r_0}}b_{k_s}(sk_{r_0}-r_0k_s)t^{k_{r_0}+k_s-1}+({\rm lower \ degree\ terms\ in \ } t),
\end{split}
\end{equation*}
which implies a contradiction. Claim \ref{cla.2} is proved.

Now it is clear that ${\bf l}M=0$ from Claim \ref{cla.2}. Denote $h_r(1)=a_r\in\C^*$ for $r\in\Z+\frac12$. Then
\begin{equation*}
\begin{split}
-ra_{m+r}=&-rh_{m+r}(1)=[d_m,h_r](1)\\
=&d_mh_r(1)-h_rd_m(1)=a_rd_m(1)-h_rd_m(1)\\
=&a_r(\lambda^m(t+m\al))-(\lambda^m(t+r+m\al))a_r\\
=&-ra_r\lambda^m,
\end{split}
\end{equation*}
which implies that $a_{m+r}=\lambda^ma_r$ for $m\in\Z,r\in\Z+\frac12$.
So, we have a $\DD$-module isomorphism
$$M\rightarrow \Omega(\lambda,\al,\lambda^{-\frac12}a_{\frac12}),\ \ t^i\rightarrow t^i.$$
The remaining part follows from Lemma \ref{main4.1}.
In conclusion, we complete the proof.
\end{proof}
\section{Tensor products of Whittaker modules and $\UU(\C d_0)$-free $\DD$-modules}
In this section, we show that if $\Omega(\lambda,\al,\be)$ and $W_{\varphi_m}$ are irreducible $\DD$-modules,
then tensor product module  $\Omega(\lambda,\al,\be)\otimes W_{\varphi_m}$
is also irreducible for $\lambda\in\C^*,\al,\be\in\C,m\in\Z_+$.   Moreover, we prove that
$\Omega(\lambda,\al,\be)\otimes W_{\varphi_m}\cong\Omega(\lambda_1,\al_1,\be_1)\otimes W_{\varphi_{m_1}}$
if and only if $\Omega(\lambda,\al,\be)\cong\Omega(\lambda_1,\al_1,\be_1)$ and $W_{\varphi_m}\cong W_{\varphi_{m_1}}$.
In fact, we get the more general results.

\begin{proposition}\label{prop5.1}
Let $\lambda\in\C^*,\al,\be\in\C$ with $(\al,\be)\ne (0,0)$, and $V$ be an irreducible restricted module over $\DD$.
Then $\Omega(\lambda,\al,\be)\otimes V$ is an irreducible $\DD$-module.
\end{proposition}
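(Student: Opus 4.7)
The plan is to take an arbitrary nonzero $\DD$-submodule $W\subseteq\Omega(\lambda,\al,\be)\otimes V$ and prove it equals the whole space in three stages: first pull a pure tensor $1\otimes v$ out of any element of $W$, then upgrade the set of such $v$'s to a $\DD$-submodule of $V$, and finally reconstruct every $t^k\otimes v$ from $1\otimes v$.

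For the first step I take $0\ne w\in W$, write $w=\sum_{i=0}^n t^i\otimes v_i$ with $v_n\ne 0$, and use that $V$ is restricted and only finitely many $v_i$ occur to choose $N\in\N$ with $d_m v_i=h_r v_i=0$ for all $i$ and all $m,r>N$. If $\be\ne 0$, then for half-integer $r>N$ the element
\[
h_r w=\be\lambda^r\sum_{i=0}^n(t+r)^i\otimes v_i
\]
is $\be\lambda^r$ times a polynomial in $r$ of degree $n$ whose leading $r^n$-coefficient equals $1\otimes v_n$, and a Vandermonde argument on $n+1$ distinct values of $r$ (absorbing the factor $\lambda^r$) places $1\otimes v_n$ in $W$. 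If $\be=0$, then necessarily $\al\ne 0$, and the same strategy with $d_m$ in place of $h_r$ works, since $\lambda^{-m}d_m w=\sum_i(t+m\al)(t+m)^i\otimes v_i$ has degree $n+1$ in $m$ with leading coefficient $\al\otimes v_n$.

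For the second step, set $U:=\{v\in V:1\otimes v\in W\}$; I claim $U$ is a $\DD$-submodule of $V$. Stability under $h_r$ is immediate from $h_r(1\otimes v)-\be\lambda^r(1\otimes v)=1\otimes h_r v$, and the centre acts as scalars on the irreducible $V$. The main obstacle is stability under $d_m$: the identity $d_m(1\otimes v)=\lambda^m(t+m\al)\otimes v+1\otimes d_m v$ only provides, after subtracting $\lambda^m m\al(1\otimes v)\in W$, the mixed element
\[
Q:=\lambda^m t\otimes v+1\otimes d_m v\in W.
\]
To disentangle $1\otimes d_m v$ from $Q$, I apply a second operator $d_{m'}$ with $m'$ large enough that $d_{m'}v$, $d_{m+m'}v$, and $d_{m'}(d_m v)$ all vanish. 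A direct expansion shows that $\lambda^{-m'}d_{m'}d_m(1\otimes v)$ is a polynomial in $m'$ of degree $2$, Vandermonde over three distinct large values of $m'$ extracts its $m'^{1}$-coefficient into $W$, and after one further subtraction of the known scalar multiple $\lambda^m m\al^2(1\otimes v)$ I am left with
\[
P:=\lambda^m(1+\al)t\otimes v+\al\otimes d_m v\in W.
\]
The $2\times2$ system $\{P,Q\}$ in the unknowns $t\otimes v$ and $1\otimes d_m v$ has determinant $\lambda^m\ne 0$, so Cramer's rule gives $1\otimes d_m v=(1+\al)Q-P\in W$, proving $d_m v\in U$.

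Once the second step is done, $U$ is a nonzero $\DD$-submodule of $V$, so $U=V$ by irreducibility, and $1\otimes v\in W$ for every $v\in V$. An induction on $k$ using $t^k\otimes v=d_0(t^{k-1}\otimes v)-t^{k-1}\otimes d_0 v$ then places every $t^k\otimes v$ in $W$, whence $W=\Omega(\lambda,\al,\be)\otimes V$. The main technical hurdle is the middle step: the action of $d_m$ on a pure tensor produces an unwanted $t$-dependent summand that no single subsequent application can eliminate, forcing a second operator $d_{m'}$, a further Vandermonde, and a $2\times 2$ linear-algebra manipulation to solve simultaneously for $t\otimes v$ and $1\otimes d_m v$.
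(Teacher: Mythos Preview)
Your argument is correct, and Step~1 is essentially identical to the paper's. The difference lies in the order of Steps~2 and~3. The paper inverts them: after obtaining $1\otimes v\in M$, it first shows by an easy induction (using $d_m(t^j\otimes v)=\lambda^m(t+m\al)(t+m)^j\otimes v$ for $m\ge N(v)$, whose leading term in $t$ is $t^{j+1}$) that $t^j\otimes v\in M$ for all $j$, hence $\Omega(\lambda,\al,\be)\otimes v\subseteq M$. It then defines $W=\{w\in V:\Omega(\lambda,\al,\be)\otimes w\subseteq M\}$, and the submodule check becomes a one-liner, since $d_m(\Omega\otimes w)\subseteq M$ and $d_m\Omega\otimes w\subseteq\Omega\otimes w\subseteq M$ immediately give $\Omega\otimes d_m w\subseteq M$. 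By working with the larger set $\Omega\otimes w$ rather than just $1\otimes w$, the paper sidesteps exactly the difficulty you identify as the ``main technical hurdle'': the stray $t$-term produced by $d_m$ is already absorbed. Your route---keeping $U=\{v:1\otimes v\in W\}$ and forcing $d_m$-stability via a second operator $d_{m'}$, a degree-$2$ Vandermonde, and a $2\times2$ system---is valid, but more laborious than necessary.
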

\begin{proof}
It is clear that, for any $v\in V$, there exists $N(v)\in\Z_+$ such that $d_mv=h_rv=0$
for $m,r\geq N(v)$ by the definition of restricted module.
Now, suppose $M$ is a nonzero submodule of $\Omega(\lambda,\al,\be)\otimes V$.
We only need to show $$M=\Omega(\lambda,\al,\be)\otimes V.$$

\begin{claim}\label{cla.3}
There exists a $v\in V\setminus\{0\}$ such that $1\otimes v\in M$.
\end{claim}

Take a nonzero element $w=\sum_{i=0}^st^i\otimes v_i\in M$ such that $v_i\in V, v_s\ne 0$ and $s$ is minimal.

Let $N=max\{N(v_i):i=0,1,\cdots,s\}$. Then $d_m(v_i)=h_r(v_i)=0$, for all $m,r\geq N$ and $i=0,1,\cdots,s$.
If $\be\ne 0$, then we get that
$$\be^{-1}\lambda^{-r}h_rw=\sum_{i=0}^s(t+r)^i\otimes v_i\in M,$$
for $r\geq N$. We can write the right hand side in the form
$$\sum_{i=0}^sr^iw_i\in M,\forall r>N,$$
where $w_i\in \Omega(\lambda,\al,\be)\otimes V$ are independent of $r$. In particular, $w_s=1\otimes v_s$.
Taking $r=\frac12+N,\frac12+(N+1),\cdots,\frac12+(N+s)$, we see that the coefficient matrix of $w_i$ is a Vandermonde matrix.
Thus each $w_i\in M$. In particular, $w_s=1\otimes v_s\in M$.

If  $\be =0$, then $\al\ne 0$. Similarly,  using the action of $d_mw, m>N$ we can show that   Claim \ref{cla.3} holds as well.

\begin{claim}\label{cla.4}
$M=\Omega(\lambda,\al,\be)\otimes V$.
\end{claim}
From Claim \ref{cla.3}, we have that $1\otimes v\in M$ for some nonzero $v\in V$.
Note that
\begin{eqnarray}\label{eq5.1}
d_m(t^j\otimes v)=\lambda^m(t+m\al)(t+m)^j\otimes v=\lambda^mt(t+m)^j\otimes v+\lambda^mm\al(t+m)^j\otimes v,
\end{eqnarray}
for any $j\in\Z_+,m\geq N(v)$.
Thus by induction on $j$, we deduce that $t^j\otimes v\in M$ for all $j\in\Z_+$,
i.e., $\Omega(\lambda,\al,\be)\otimes v\in M$.
Define $$W=\{w\in V:\Omega(\lambda,\al,\be)\otimes w\subseteq M\}.$$ It is clear that $W\ne 0$ since $v\in W$. We need to prove that $W$ is $\DD$-submodule of $V$.
For any $w\in W,m\in\Z,r\in\frac12+\Z$, we compute that
\begin{eqnarray*}
&&d_m(\Omega(\lambda,\al,\be)\otimes w)=d_m\Omega(\lambda,\al,\be)\otimes w+\Omega(\lambda,\al,\be)\otimes d_mw\subseteq M,\\
&&h_r(\Omega(\lambda,\al,\be)\otimes w)=h_r\Omega(\lambda,\al,\be)\otimes w+\Omega(\lambda,\al,\be)\otimes h_rw\subseteq M.
\end{eqnarray*}
Thus
$\Omega(\lambda,\al,\be)\otimes d_mw,\Omega(\lambda,\al,\be)\otimes h_rw\subseteq M$, i.e., $d_mw,h_rw\in W$.
These show that $W$ is a nonzero submodule of $V$. Thus $W=V$ since $V$ is irreducible,
which implies $M=\Omega(\lambda,\al,\be)\otimes V$. We complete the proof.
\end{proof}
\begin{proposition}\label{prop5.2}
Let $\lambda,\lambda_1\in\C^*,\al,\al_1,\be,\be_1\in\C$.
Assume that $V$ and $V_1$ are irreducible restricted modules over $\DD$.
Then $\Omega(\lambda,\al,\be)\otimes V$ and $\Omega(\lambda_1,\al_1,\be_1)\otimes V_1$ are isomorphic as $\DD$-modules
if and only if $(\lambda,\al,\be)=(\lambda_1,\al_1,\be_1)$ and $V\cong V_1$ as $\DD$-modules.
\end{proposition}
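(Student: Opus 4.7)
The ``if'' direction is immediate. For the converse, fix a $\DD$-module isomorphism $\phi:\Omega(\lambda,\al,\be)\otimes V \to \Omega(\lambda_1,\al_1,\be_1)\otimes V_1$. My plan is to first match the three scalar parameters by studying $\phi(1\otimes v)$ for a single nonzero $v\in V$, and then use the resulting rigidity to extract a $\DD$-linear bijection $\psi: V\to V_1$. For the setup, write $\phi(1\otimes v)=\sum_{j=0}^s t^j\otimes w_j$ with $w_s\neq 0$, and use the restricted hypothesis to pick $N\in\Z_+$ killing $d_m$ and $h_r$ on $v$ and on all $w_j$ whenever $m\ge N$ and $r\ge N$ with $r\in\tfrac12+\Z$.

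The first input will be the two intertwining identities $\phi(h_r(1\otimes v))=h_r\phi(1\otimes v)$ and $\phi(d_m(1\otimes v))=d_m\phi(1\otimes v)$ for these large $m,r$: both collapse to clean closed-form equations because the inner $\DD$-action on $v$ and on the $w_j$ disappears. The $h_r$ identity reduces to $\be\lambda^r\sum_j t^j\otimes w_j = \be_1\lambda_1^r\sum_j(t+r)^j\otimes w_j$; extracting the coefficient of $t^s$ forces $\be\lambda^r = \be_1\lambda_1^r$ for all large $r$, which yields $\be=0\Leftrightarrow\be_1=0$ and, in the nonzero case, both $\lambda=\lambda_1$ and $\be=\be_1$. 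The $d_m$ identity reduces to
$$
\lambda^m\phi(t\otimes v)+\lambda^m m\al\,\phi(1\otimes v)\;=\;\lambda_1^m\sum_{j=0}^s(t+m\al_1)(t+m)^j\otimes w_j,\qquad (\ast)
$$
an exponential-polynomial identity in $m$. Since the right-hand side is $\lambda_1^m$ times a polynomial in $m$ of degree $s+1$ whose constant term $\sum_j t^{j+1}\otimes w_j$ is nonzero, linear independence of distinct exponentials forces $\lambda=\lambda_1$; this also handles the residual case $\be=\be_1=0$.

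Once $\lambda=\lambda_1$, I view $(\ast)$ as a polynomial identity in $m$: the left side has $m$-degree at most $1$, while the right has degree $s+1$. Assuming $s\ge 1$, the $m^{s+1}$-coefficient forces $\al_1=0$, and then the $m^s$-coefficient reads $t\otimes w_s$, which can vanish only if $s\le 1$. The delicate remaining case $s=1,\al_1=0$ would force $\phi(1\otimes v)=t\otimes w_1$; iterating $(\ast)$ with $t^k\otimes v$ in place of $1\otimes v$ will give $\phi(p(t)\otimes v)=t\,p(t)\otimes w_1$ for every polynomial $p\in\C[t]$, and repeating this for every nonzero $v\in V$ places $\mathrm{Im}(\phi)$ inside $t\,\Omega(\lambda_1,\al_1,\be_1)\otimes V_1$, contradicting surjectivity of $\phi$. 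Hence $s=0$ for every $v\ne 0$, and the $m^1$-coefficient of $(\ast)$ then delivers $\al=\al_1$.

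With $s=0$, write $\phi(1\otimes v)=1\otimes\psi(v)$, defining a linear map $\psi:V\to V_1$. The $m^0$-coefficient of $(\ast)$ yields $\phi(t\otimes v)=t\otimes\psi(v)$, and a short induction on $k$ via $d_m$ extends this to $\phi(t^k\otimes v)=t^k\otimes\psi(v)$ for all $k\in\Z_+$. Comparing the $1\otimes(\cdot)$ components of the $d_m$ and $h_r$ intertwining identities for arbitrary $m,r$ then gives $\psi(d_m v)=d_m\psi(v)$ and $\psi(h_r v)=h_r\psi(v)$, so $\psi$ is $\DD$-linear; injectivity is inherited from $\phi$, and surjectivity follows by running the entire argument for $\phi^{-1}$. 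I expect the main obstacle to be the borderline case $s=1,\al_1=0$ above: the naive degree/leading-coefficient comparison fails precisely there because the putative leading $m^{s+1}$-coefficient vanishes automatically, and only the iterated-image argument using the global surjectivity of $\phi$ disposes of it.
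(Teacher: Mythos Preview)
Your argument is correct and reaches the same conclusion as the paper, but the key step---showing that the leading $t$-exponent $s$ vanishes---is handled quite differently. The paper applies the difference operator $\lambda^{-m}d_m-\lambda^{-m_1}d_{m_1}$ (which acts on $1\otimes v$ as the scalar $\al(m-m_1)$) and reads off, from the $t^{p+1}$ and $t^p$ coefficients of the resulting identity, both $\lambda=\lambda_1$ and the exact relation $\al-\al_1=p\ge 0$; invoking $\phi^{-1}$ symmetrically gives $\al_1-\al\ge 0$, hence $p=0$ immediately, with no case analysis. You instead expand $(\ast)$ as a polynomial in $m$, which in the borderline case $s=1$ forces $\al_1=0$, $\al=1$, $w_0=0$, and you then eliminate that case by the image-containment argument $\mathrm{Im}(\phi)\subseteq t\,\Omega(\lambda_1,\al_1,\be_1)\otimes V_1$. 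Both routes work: the paper's is slicker because the relation $\al-\al_1=p$ plus symmetry bypasses the delicate case altogether, while yours is more computational but self-contained and does not require spotting that relation.

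Two small expository points. First, your appeal to ``linear independence of distinct exponentials'' for the vector-valued identity $(\ast)$ is most transparent after projecting onto the $t^{s+1}$-component, where the right side becomes $\lambda_1^m w_s$ and the left becomes $\lambda^m u$ for some fixed $u\in V_1$. Second, to define $\psi$ globally you are implicitly using that, once $\al=\al_1$ is established from the chosen $v$, the same degree comparison forces $s(v')=0$ for \emph{every} nonzero $v'\in V$ (since $s(v')\ge 1$ would again yield $\al_1=0$, $\al=1$); this deserves a sentence, though the paper is equally terse here. Finally, for surjectivity of $\psi$ the paper simply uses irreducibility of $V_1$ (any nonzero $\DD$-submodule equals $V_1$), which is shorter than rerunning the whole argument for $\phi^{-1}$.
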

\begin{proof}
It is clear that ``if part" is trivial. We only need to show ``only if part".
Let $\psi:\Omega(\lambda,\al,\be)\otimes V\to  \Omega(\lambda_1,\al_1,\be_1)\otimes V_1$  be a module isomorphism.
For any nonzero element $1\otimes v\in \Omega(\lambda,\al,\be)\otimes V$, suppose
$$\psi(1\otimes v)=\sum_{i=0}^pt^i\otimes w_i,$$
where $w_i\in V_1$ and $w_p\ne 0$. Let $N_v=max\{N{(v)},N{(w_0)},N{(w_1)},\cdots,N{(w_p)}\}$.
Then $$d_mv=d_m(w_i)=h_rv=h_r(w_i)=0,$$
for $m,r\geq N_v,i=0,1,\cdots,p$.
\begin{claim}\label{cla.5}
$\lambda=\lambda_1,\al=\al_1,\be=\be_1$.
\end{claim}
Take $m,m_1\geq N_v$, we have
$$(\lambda^{-m}d_m-\lambda^{-m_1}d_{m_1})(1\otimes v)=\al(m-m_1)(1\otimes v).$$
By $(\lambda^{-m}d_m-\lambda^{-m_1}d_{m_1})\psi(1\otimes v)=\al(m-m_1)\psi(1\otimes v)$,
we deduce that
\begin{eqnarray}\label{eq5.2}
\al(m-m_1)\sum_{i=0}^pt^i\otimes w_i=\sum_{i=0}^p\{(\frac{\lambda_1}{\lambda})^{m}(t+m\al_1)(t+m)^i
-(\frac{\lambda_1}{\lambda})^{m_1}(t+m_1\al_1)(t+m_1)^i\}\otimes w_i.
\end{eqnarray}
Thus $((\frac{\lambda_1}{\lambda})^{m}-(\frac{\lambda_1}{\lambda})^{m_1})t^{p+1}\otimes w_p=0$ for $m,m_1\geq N_v$.
So, $\lambda=\lambda_1$. Therefore, the equation (\ref{eq5.2}) becomes
$$
\al(m-m_1)\sum_{i=0}^pt^i\otimes w_i=\sum_{i=0}^pt\{(t+m)^i-(t+m_1)^i\}\otimes w_i
+\sum_{i=0}^p\al_1\{m(t+m)^i-m_1(t+m_1)^i\}\otimes w_i,
$$
for $m,m_1\geq N_v$. Then the coefficient of $t^p\otimes w_p$ in the right side is $(\al_1+p)(m-m_1)$
which implies $\al=p+\al_1$, i.e., $\al-\al_1=p\ge 0$.
Similar, using $\psi^{-1}$ we can get $\al-\al_1\ge 0$.
So $\al=\al_1$ and $p=0$. Thus  $$\psi(1\otimes v)=1\otimes w_0.$$
Note that for any $r\geq N_v$, we have
\begin{eqnarray*}
\be\lambda^r(1\otimes w_0)=\be\lambda^r\psi(1\otimes v)=\psi(h_r(1\otimes v))
=h_r(1\otimes w_0)
=\be_1\lambda_1^r(1\otimes w_0).
\end{eqnarray*}
Thus $\be=\be_1$.
Hence, Claim \ref{cla.5} is proved.

Now we define the linear map
$$\xi:V\rightarrow V_1,$$
such that $\psi(1\otimes v)=1\otimes \xi(v)$
for $v\in V$. It is clear that $\xi$ is injective.
Note that for any $r\in\Z+\frac12$,
\begin{equation*}
\begin{split}
\be\lambda^r(1\otimes \xi(v))+1\otimes \xi(h_r(v))&=\psi(h_r(1\otimes v))=h_r(\psi(1\otimes v))\\
&=h_r(1\otimes \xi(v))=\be\lambda^r(1\otimes \xi(v))+1\otimes h_r(\xi(v)),
\end{split}
\end{equation*}
which implies
\begin{eqnarray}\label{eq5.3}
\xi(h_r(v))=h_r(\xi(v)), \forall v\in V,r\in\Z+\frac12.
\end{eqnarray}
Next, we consider the equation
$\psi(d_m(1\otimes v))=d_m(\psi(1\otimes v))$
for $m\geq N_v$. We deduce
\begin{eqnarray*}
\lambda^m\psi(t\otimes v)+m\al\lambda^m(1\otimes\xi(v))=\lambda^mt\otimes \xi(v)+m\al\lambda^m(1\otimes\xi(v)).
\end{eqnarray*}
So, $\psi(t\otimes v)=t\otimes \xi(v)$.
Hence $$\psi(d_n(1)\otimes v)=d_n(1)\otimes \xi(v),$$ for $n\in\Z$.
Using the equation $\psi(d_n(1\otimes v))=d_n\psi(1\otimes v)$ for $n\in\Z,v\in V$,
we deduce $$1\otimes \xi(d_n(v))=1\otimes d_n(\xi(v)).$$
That shows
\begin{eqnarray}\label{eq5.4}
\xi(d_n(v))=d_n(\xi(v)), \forall v\in V, n\in\Z.
\end{eqnarray}
It is clear that
$$
\psi({\bf c}(1\otimes v))={\bf c}\psi(1\otimes v){\rm \ and \ }
\psi({\bf l}(1\otimes v))={\bf l}\psi(1\otimes v),
$$
imply that
\begin{eqnarray}\label{eq5.5}
\xi({\bf c}(v))={\bf c}\xi(v) {\rm\ and\ } \xi({\bf l}(v))={\bf l}\xi(v).
\end{eqnarray}
Therefore, $\xi$ is a $\DD$-module  homomorphism by equations (\ref{eq5.3}), (\ref{eq5.4}), and (\ref{eq5.5}).
Since $\xi(V)\ne 0$ and $V_1$ is irreducible, so $\xi(V)=V_1$.
Thus $\xi$ is a $\DD$-module  isomorphism from $V$ to $V_1$.
In conclusion, we complete the proof.
\end{proof}

It is clear that Whittaker $\DD$-modules $W_{\varphi_m}$ are restricted modules over $\DD$.
So we have the following two main theorems from Theorems \ref{main3.5}, \ref{main4.2}
and Propositions \ref{prop5.1}, \ref{prop5.2}.
\begin{theorem}\label{main5.3}
Let $m\in \N,\lambda\in\C^*,\al,\be\in\C$, and $\varphi_m:\DD^{(m, 0)}\rightarrow \C$ be a Whittaker function.
\begin{enumerate}[$(1)$]
    \item If $\varphi_m({\bf l})\ne 0$, then $\Omega(\lambda,\al,\be)\otimes W_{\varphi_m}$ is an irreducible $\DD$-module
    if and only if $$(\varphi_m(d_{2m}), 2\varphi_m({\bf l}) \varphi_m(d_{2m-1})-\varphi_m(h_{m-\frac12})^2)\ne(0,0)
    {\rm\ and\ } (\al,\be)\ne(0,0).$$
    \item  If $\varphi_m({\bf l})=0$, then $\Omega(\lambda,\al,\be)\otimes W_{\varphi_m}$ is an irreducible $\DD$-module
    if and only if
    $$\varphi_m(h_{m-\frac12})\ne0 {\rm\ and\ }(\al,\be)\ne(0,0).$$
\end{enumerate}
\end{theorem}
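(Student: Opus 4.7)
The plan is to deduce Theorem~\ref{main5.3} directly from Theorem~\ref{main3.5} (irreducibility of $W_{\varphi_m}$), Theorem~\ref{main4.2} together with Lemma~\ref{main4.1} (irreducibility of $\Omega(\lambda,\al,\be)$), and Proposition~\ref{prop5.1} (irreducibility of tensor products), after confirming that every Whittaker $\DD$-module $W_{\varphi_m}$ lies in $\RR_{\DD}$.

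First I would verify this restricted-ness. By the PBW theorem every $v\in W_{\varphi_m}$ is a finite sum of vectors of the form $y\cdot w_{\varphi_m}$, where $y$ is a monomial in the ``negatively indexed'' elements relative to $\DD^{(m,0)}$. Since $\varphi_m(d_{2m+j})=\varphi_m(h_{m+j-\frac12})=0$ for all $j\in\N$, the generator $w_{\varphi_m}$ is annihilated by $d_n$ for $n\ge 2m+1$ and by $h_r$ for $r\ge m+\tfrac12$. Bracketing $d_n$ or $h_r$ (with $n$ or $r$ taken sufficiently large) past the fixed monomial $y$ produces only terms whose positive indices are still large enough to annihilate $w_{\varphi_m}$, so $v$ is killed by $\DD_{\ge N}$ for some $N$, proving $W_{\varphi_m}\in\RR_{\DD}$.

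For the $(\Leftarrow)$ direction in both parts, the numerical hypothesis imposed on $\varphi_m$ is exactly the irreducibility criterion for $W_{\varphi_m}$ recorded in Theorem~\ref{main3.5}, so $W_{\varphi_m}$ is an irreducible restricted $\DD$-module. The hypothesis $(\al,\be)\neq(0,0)$ is the irreducibility criterion for $\Omega(\lambda,\al,\be)$ from Theorem~\ref{main4.2}. Applying Proposition~\ref{prop5.1} with $V=W_{\varphi_m}$ delivers irreducibility of $\Omega(\lambda,\al,\be)\otimes W_{\varphi_m}$.

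For the $(\Rightarrow)$ direction I would argue by contrapositive. If $(\al,\be)=(0,0)$, then Lemma~\ref{main4.1}(1) exhibits $t\Omega(\lambda,0,0)$ as a proper nonzero $\DD$-submodule of $\Omega(\lambda,0,0)$, and consequently $t\Omega(\lambda,0,0)\otimes W_{\varphi_m}$ is a proper nonzero $\DD$-submodule of $\Omega(\lambda,0,0)\otimes W_{\varphi_m}$. If instead the numerical condition on $\varphi_m$ fails, then Theorem~\ref{main3.5} tells us $W_{\varphi_m}$ admits a proper nonzero submodule $W'$, and $\Omega(\lambda,\al,\be)\otimes W'$ is then a proper nonzero submodule of the tensor product. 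Either failure gives reducibility. There is essentially no obstacle in this proof: all substantive work was done in the cited theorems and Proposition~\ref{prop5.1}; the only routine verification is the restricted-ness of $W_{\varphi_m}$, which the authors also take as immediate from the construction.
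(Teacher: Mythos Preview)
Your proposal is correct and follows essentially the same approach as the paper, which simply states that the theorem follows from Theorems~\ref{main3.5}, \ref{main4.2} and Proposition~\ref{prop5.1} after noting that $W_{\varphi_m}$ is restricted. You have merely made explicit the easy contrapositive argument for the $(\Rightarrow)$ direction (tensoring with a proper nonzero submodule on either side), which the paper leaves to the reader.
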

\begin{theorem}\label{main5.4}
Let $m,m_1\in \Z_+,\lambda,\lambda_1\in\C^*,\al,\al_1,\be,\be_1\in\C.$
Suppose that $\varphi_m:\DD^{(m, 0)}\rightarrow \C$ and $\varphi_{m_1}:\DD^{(m_1, 0)}\rightarrow \C$ be the Whittaker functions such that the corresponding
Whittaker $\DD$-modules $W_{\varphi_{m_1}}$ and $W_{\varphi_{m_1}}$ are all irreducible.
The following results hold.
\begin{enumerate}[$(1)$]
    \item $\Omega(\lambda,\al,\be)\cong\Omega(\lambda_1,\al_1,\be_1)$ if and only if
    $(\lambda,\al,\be)=(\lambda_1,\al_1,\be_1)$.
    \item $\Omega(\lambda,\al,\be)\otimes W_{\varphi_m}\cong\Omega(\lambda_1,\al_1,\be_1)\otimes W_{\varphi_{m_1}}$
    if and only if $(\lambda,\al,\be)=(\lambda_1,\al_1,\be_1)$ and $W_{\varphi_m}\cong W_{\varphi_{m_1}}$.
\end{enumerate}
\end{theorem}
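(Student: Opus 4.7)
The plan is to obtain both parts as direct consequences of the general tensor-product rigidity result Proposition \ref{prop5.2}, once it is checked that Whittaker modules $W_{\varphi_m}$ lie in $\RR_\DD$.

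First I would verify that $W_{\varphi_m}$ is a restricted $\DD$-module. By PBW, every $v\in W_{\varphi_m}$ is a finite sum of monomials $Xw_{\varphi_m}$ where $X$ is a product of finitely many generators $d_i$ with $i<m$ and $h_r$ with $r<m-\frac12$. For any such monomial and any $n$ (resp.\ $r$) larger than the absolute values of the indices of generators appearing in $X$ plus $2m$, expanding the commutator $[d_n,X]$ (resp.\ $[h_r,X]$) via the defining relations and pushing everything to the right shows that each surviving term either annihilates $w_{\varphi_m}$, because $d_k w_{\varphi_m}=\varphi_m(d_k)w_{\varphi_m}=0$ for $k>2m$ and $h_s w_{\varphi_m}=0$ for $s>m-\frac12$, or acquires a factor with strictly positive index that does so. Hence there exists $N=N(v)\in \N$ with $d_n v=h_r v=0$ for all $n,r>N$, so $W_{\varphi_m}\in\RR_\DD$.

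For part (1), the quickest route is to apply Proposition \ref{prop5.2} with $V=V_1$ equal to the trivial one-dimensional $\DD$-module, which is clearly irreducible and restricted. Via the canonical identification $\Omega(\lambda,\al,\be)\otimes\C\cong\Omega(\lambda,\al,\be)$, the proposition specializes to the statement that $\Omega(\lambda,\al,\be)\cong\Omega(\lambda_1,\al_1,\be_1)$ forces $(\lambda,\al,\be)=(\lambda_1,\al_1,\be_1)$. Alternatively, one may compare the actions of $d_n$ for large $|n|$ on the generator $1$ to recover $\lambda$ and $\al$, and then use $h_r$ for large $r$ to recover $\be$; this mirrors the computation carried out inside the proof of Proposition \ref{prop5.2}.

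For part (2), I would apply Proposition \ref{prop5.2} directly with $V=W_{\varphi_m}$ and $V_1=W_{\varphi_{m_1}}$: irreducibility is part of the hypothesis and restrictedness was verified in the first paragraph, so the proposition yields exactly the asserted equivalence $(\lambda,\al,\be)=(\lambda_1,\al_1,\be_1)$ together with $W_{\varphi_m}\cong W_{\varphi_{m_1}}$. The main obstacle, such as it is, lies entirely in the verification that Whittaker modules are restricted; once that is in hand, the theorem is a formal consequence of Proposition \ref{prop5.2}.
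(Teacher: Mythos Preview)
Your proposal is correct and follows essentially the same route as the paper: the paper simply notes that Whittaker modules $W_{\varphi_m}$ are restricted and then invokes Proposition \ref{prop5.2} (together with Theorems \ref{main3.5}, \ref{main4.2} and Proposition \ref{prop5.1}) to obtain both parts. One small imprecision: in your PBW description the $h_r$ occurring in $X$ should have $r<\tfrac12$ (i.e.\ $r\le -\tfrac12$), not $r<m-\tfrac12$, since $\DD^{(m,0)}$ already contains all $h_r$ with $r\ge \tfrac12$; this does not affect the restrictedness argument.
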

\begin{remark}
Using Theorems \ref{main3.5}, \ref{main4.2} and \ref{main5.3} we can obtain many new irreducible non-weight modules over $\DD$.
\end{remark}

{\bf Acknowledgments:}
The research in this paper was carried out during the visit of the first
author at Wilfrid Laurier University in 2019-2021.
The first author is partially supported by CSC of China (No. 201906340096), and NSF of China (Grants 11771410 and 11931009).
The second author is partially supported by NSF of China (Grant 11801066).
The third author is partially supported by  NSF of China (Grant 11871190) and NSERC (311907-2020).
The authors would like to thank the referees for valuable suggestions to improve the paper.

\end{document}